\documentclass[reqno]{amsart}
\usepackage{hyperref}
\newtheorem*{theoA}{Theorem A}
\newtheorem*{theoB}{Theorem B}
\newtheorem*{theoC}{Theorem C}

\newtheorem{theo}{Theorem}[section]
\newtheorem{lem}{Lemma}[section]

\newtheorem{ques}{Question}[section]
\newtheorem{exm}{Example}[section]

\newtheorem{rem}{Remark}[section]
\newcommand{\ol}{\overline}
\newcommand{\be}{\begin{equation}}
\newcommand{\ee}{\end{equation}}
\newcommand{\beas}{\begin{eqnarray*}}
	\newcommand{\eeas}{\end{eqnarray*}}
\newcommand{\bea}{\begin{eqnarray}}
\newcommand{\eea}{\end{eqnarray}}
\newcommand{\lra}{\longrightarrow}
\numberwithin{equation}{section}

\begin{document}
\title[\hfilneg \hfil  Uniqueness of entire functions]
{Uniqueness of an entire function sharing two values jointly with its differential polynomials}
\author[G. Haldar\hfil
\hfilneg]
{Goutam Haldar}


\address{Goutam Haldar  \newline
Department of Mathematics, Malda College, Rabindra Avenue, Malda, West Bengal 732101, India.}
\email{goutamiit1986@gmail.com, goutamiitm@gmail.com}


\subjclass[2010]{30D35}
\keywords{ Entire function, Derivative, Set sharing, Uniqueness}

\maketitle
\let\thefootnote\relax

\begin{abstract}
In this paper, we continue to investigate the uniqueness problem when an entire function $f$ and its linear differential polynomial $L(f)$ share two distinct complex values CMW (counting multiplicities in the weak sense) jointly. Also, We investigate the same problem when $f$ and its differential monomial $M(f)$ share two distinct complex values CMW. Our results generalize the recent result of Lahiri (Comput. Methods Funct. Theory, https://doi.org/10.1007/s40315-020-00355-4). 	
\end{abstract}
\section{Introduction, Definitions and Results}
A function analytic in the open complex plane $\mathbb{C}$ except possibly for poles is called meromorphic in $\mathbb{C}$. If no poles occur, then the function is called entire. For a non-constant meromorphic function $f$ defined in $\mathbb{C}$ and for $a\in\mathbb{C}\cup\{\infty\}$, we denote by $E(a,f)$ the set of $a$-points of $f$ counted multiplicities and $\ol E(a,f)$ the set of all $a$-points ignoring multiplicities. If for two non-constant meromorphic functions $f$ and $g$, $E(a,f)=E(a,g)$, we say that $f$ and $g$ share the value $a$ CM (counting multiplicities). If $\ol E(a,f)=\ol E(a,g)$, then we say that $f$ and $g$ are said to share the value $a$ IM (ignoring multiplicities). Throughout the paper, the standard notations of Nevanlinna's value distribution theory of meromorphic functions \cite{4, Yan & Yi & 2003} have been adopted. A meromorphic function $a(z)$ is said to be small with respect to $f$ provided that $T(r,a)=S(r,f)$, that is $T(r,a)=o(T(r,f))$ as $r\lra \infty$, outside of a possible exceptional set of finite linear measure. \vspace{1.5mm}

\par In $1976$, it was shown by Rubel and Yang \cite{Rubel & Yang & 1977} that if an entire function $f$ and its derivative $f^{\prime}$ share two values $a$, $b$ CM, then $f=f^{\prime}$. After that Gundersen \cite{Gundersen & 1980} improved the result by considering two IM shared Values. Yang \cite{Yang & 1990} also extended the result of Rubel and Yang \cite{Rubel & Yang & 1977} by replacing $f^{\prime}$ with the $k$-th derivative $f^{(k)}$. Since then the subject of sharing values between a meromorphic
function and its derivatives has become one of the most prominent branches of the uniqueness theory. Mues and Steinmetz \cite{Mues & Stein & 1979} showed that if a meromorphic function $f$ shares three finite values IM with $f^{\prime}$, then $f=f^{\prime}$. Frank and Schwick \cite{Frank & Schwick & 1992} improved this result by replacing $f^{\prime}$ with $f^{(k)}$, where $k$ is a positive integer. After that many mathematicians spent their times towards the improvements of this result (see \cite{Frank & Hua & 1999, Gu & 1994, Li & 2005, Mues & Rein & 1992}). In $2000$, Li
and Yang \cite{Li & Yang & 2000} improved the result of Yang \cite{Yang & 1990} in the following.

\begin{theoA}\cite{Li & Yang & 2000}
Let $f$ be a non-constant entire function, $k$ be a positive integer and $a$, $b$ be distinct finite numbers. If $f$ and $f^{(k)}$ share $a$ and $b$ IM, then $f=f^{(k)}$.	
\end{theoA}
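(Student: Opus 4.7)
The plan is to argue by contradiction, assuming $f\not\equiv f^{(k)}$ and exploiting the IM-sharing hypothesis through carefully chosen auxiliary meromorphic functions whose Nevanlinna characteristics reduce to the small-function class $S(r,f)$ via the logarithmic derivative lemma.

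The preliminary step is the standard observation that $S(r,f^{(k)})=S(r,f)$: one direction follows from $T(r,f^{(k)})\leq T(r,f)+S(r,f)$ for entire $f$, and the reverse comparison from the Second Fundamental Theorem applied to $f$ at the targets $a,b,\infty$ combined with $\ol E(a,f)=\ol E(a,f^{(k)})$ and $\ol E(b,f)=\ol E(b,f^{(k)})$. This normalization legitimizes identifying $S(r,f)$ and $S(r,f^{(k)})$ in what follows.

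The heart of the argument is to consider an auxiliary function of the form
\[
\phi=\frac{f'}{(f-a)(f-b)}-\frac{f^{(k+1)}}{(f^{(k)}-a)(f^{(k)}-b)}.
\]
Partial-fraction expansion reduces each summand to sums of logarithmic derivatives, so $m(r,\phi)=S(r,f)$. A residue calculation at a common $a$-point (of multiplicity $p$ for $f$ and $q$ for $f^{(k)}$) shows that $\phi$ has at most a simple pole there, of residue $(p-q)/(a-b)$, and is holomorphic whenever $p=q$; the analogous statement holds at common $b$-points. Proving that the counting function of the multiplicity-mismatch set is $S(r,f)$---the technical crux---will then give $T(r,\phi)=S(r,f)$. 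The dichotomy $\phi\equiv 0$ versus $\phi\not\equiv 0$ closes the argument: if $\phi\equiv 0$ one integrates to obtain $(f-a)/(f-b)=C(f^{(k)}-a)/(f^{(k)}-b)$ for a constant $C$, where $C=1$ forces $f\equiv f^{(k)}$ (a contradiction) while $C\neq 1$ expresses $f^{(k)}$ as a nontrivial M\"obius image of $f$ fixing $a$ and $b$, which is eliminated by combining Picard's theorem (an entire function misses at most one finite value) with the resulting ordinary differential equation for $f$; while $\phi\not\equiv 0$ yields algebraic relations on $f$ that, combined with the Second Fundamental Theorem, contradict the equality $T(r,f)=\ol N(r,1/(f-a))+\ol N(r,1/(f-b))+S(r,f)$ up to $S(r,f)$.

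The main obstacle is controlling the multiplicity-mismatch set: under CM sharing the multiplicities agree by hypothesis, but under IM sharing one must prove, via a secondary Wronskian-type auxiliary function (for instance something built from $(f-a)f^{(k+1)}-f'(f^{(k)}-a)$), that the common zeros of $f-a$ and $f^{(k)}-a$ at which the two multiplicities disagree form a set of $S(r,f)$-counting-function size, and likewise for $b$-points. This is the step that distinguishes the IM argument from the easier CM argument of Yang, and is where the substance of the Li--Yang improvement is concentrated.
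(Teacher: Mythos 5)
Theorem A is quoted from Li--Yang (2000); the paper under review gives no proof of it, so there is nothing internal to compare your route against, and your proposal must stand on its own. As it stands it does not: it is a plan in which every genuinely hard step is named rather than carried out. The decisive issue is exactly the one you flag yourself. Under IM sharing nothing forces the multiplicities of $f-a$ and $f^{(k)}-a$ to agree at a common $a$-point, so your claim that $T(r,\phi)=S(r,f)$ hinges entirely on showing that the counting function of the multiplicity-mismatch set is $S(r,f)$; you state that this ``will then give'' the estimate and that it should follow from ``a secondary Wronskian-type auxiliary function (for instance something built from $(f-a)f^{(k+1)}-f'(f^{(k)}-a)$),'' but no such lemma is formulated, let alone proved. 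Since this is precisely where the substance of the Li--Yang improvement over the CM case lies (as you acknowledge), the proposal omits the core of the proof rather than merely compressing routine steps.

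The two branches of your dichotomy are also left unfinished. In the case $\phi\equiv 0$ with $C\neq 1$, writing $f^{(k)}$ as a nontrivial M\"obius image of $f$ fixing $a$ and $b$ does give that $f$ omits the preimage $w_0=(b-Ca)/(1-C)$ of $\infty$, but Picard's theorem alone yields no contradiction (an entire function may omit one finite value); one must actually analyze the resulting relation $f^{(k)}=L(f)$, e.g.\ via Nevanlinna estimates on $m\left(r,\frac{1}{f-w_0}\right)$ or the structure $f=w_0+e^{g}$, and this analysis is not supplied. In the case $\phi\not\equiv 0$ you assert that ``algebraic relations on $f$'' combined with the Second Fundamental Theorem produce a contradiction with $T(r,f)=\ol N\left(r,\frac{1}{f-a}\right)+\ol N\left(r,\frac{1}{f-b}\right)+S(r,f)$, but no such relations are derived; typically this branch requires comparing $N\left(r,\frac{1}{\phi}\right)$ with the shared-value counting functions and ruling out several subcases, none of which appear. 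So the overall architecture (auxiliary function $\phi$, logarithmic-derivative estimates, dichotomy on $\phi$) is a reasonable skeleton, but the argument as written has genuine gaps at every point where IM sharing, rather than CM sharing, must be exploited.
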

We now recall the notion of set sharing as follows: Let $S$ be a subset of distinct elements of $\mathbb{C}\cup\{\infty\}$ and $E_f(S)=\bigcup_{a\in S}E(a,f)$ and $\ol E_f(S)=\bigcup_{a\in S}\ol E(a,f)$. We say that two meromorphic functions $f$ and $g$ share the set $S$ CM or IM if $E_f(S)=E_g(S)$ or $\ol E_f(S)=\ol E_g(S)$, respectively.\vspace{1mm}
\par Using the notion of set sharing instead of value sharing, Li and Yang \cite{Li & Yang 1999} proved the following theorem. 
\begin{theoB}\cite{Li & Yang 1999}
Let $f$ be a non-constant entire function and $a_1$, $a_2$ be two distinct finite complex numbers. If $f$ and $f^{(1)}$ share the set $\{a_1, a_2\}$ CM, then one and only one of the following holds:
\begin{enumerate}
\item [(i)] $f=f^{(1)}$
\item [(ii)] $f+f^{(1)}=a_1+a_2$
\item [(iii)] $f=c_1e^{cz}+c_2e^{-cz}$ with $a_1+a_2=0$, where $c$, $c_1$ and $c_2$ are non-zero constants satisfying $c^2\neq 1$ and $4c^2c_1c_2=a_1^2(c^2-1)$.
\end{enumerate}
\end{theoB}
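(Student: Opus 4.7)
My plan is as follows. Because $f$ and $f^{(1)}$ share the set $\{a_1,a_2\}$ counting multiplicities and $f$ is entire, the two entire functions $P:=(f-a_1)(f-a_2)$ and $Q:=(f'-a_1)(f'-a_2)$ have identical zero sets with multiplicities, so their quotient $A:=Q/P$ is an entire function that is nowhere zero, say $A=e^{g}$ with $g$ entire. The crucial point is that $A$ is a small function of $f$: rewriting
\[
A=\frac{f'-a_1}{f-a_1}\cdot\frac{f'-a_2}{f-a_2}=\frac{(f-a_1)'}{f-a_1}\cdot\frac{(f-a_2)'}{f-a_2},
\]
the lemma on the logarithmic derivative gives $m(r,A)=S(r,f)$, and since $A$ has no poles, $T(r,A)=S(r,f)$.

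Next I would exploit the algebraic identity $Q-P=(f'-f)\bigl(f'+f-(a_1+a_2)\bigr)$ to recast $Q=AP$ as
\[
(f'-f)\bigl(f'+f-(a_1+a_2)\bigr)=(A-1)(f-a_1)(f-a_2). \qquad (\star)
\]
If $A\equiv 1$, the left side of $(\star)$ vanishes identically; as both $f'-f$ and $f'+f-(a_1+a_2)$ are entire, one of the factors must vanish identically, which immediately produces conclusion (i) or conclusion (ii).

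The substantive case is $A\not\equiv 1$, in which both $\phi_1:=f'-f$ and $\phi_2:=f'+f-(a_1+a_2)$ are non-vanishing entire functions. At every point $z_0$ with $f(z_0)=a_i$, CM set-sharing forces $f'(z_0)\in\{a_1,a_2\}$, and a direct check shows that exactly one of $\phi_1,\phi_2$ vanishes at $z_0$, while the other takes the nonzero value $\pm(a_1-a_2)$. Any zero of $\phi_1\phi_2$ not sitting above an $a_i$-point of $f$ must, by $(\star)$, be a zero of $A-1$, whose counting function is $S(r,f)$. Combining these counting relations with the second main theorem applied to $f$ at the small targets $a_1,a_2,\infty$, I expect to force $A$ to be a nonzero constant $c^{2}\neq 1$ and, simultaneously, to force $a_1+a_2=0$. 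This step is the main obstacle of the proof: one must quantitatively rule out non-constant small $A$ by carefully balancing the contributions of $\phi_1,\phi_2,A-1$ and $P$ to $(\star)$, which is where the asymmetry created by setting $a_1+a_2=0$ becomes decisive.

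Granting $A=c^{2}$ with $c^{2}\neq 0,1$ and $a_1+a_2=0$, identity $(\star)$ collapses to $(f'-cf)(f'+cf)=a_1^{2}(1-c^{2})$, a nonzero constant. Hence $U:=f'-cf$ and $V:=f'+cf$ are entire zero-free functions whose product is constant, so $U=\alpha e^{h}$ and $V=\gamma e^{-h}$ for some entire $h$ and constants $\alpha,\gamma$ with $\alpha\gamma=a_1^{2}(1-c^{2})$. Then $f=(V-U)/(2c)$; computing $f'$ directly and equating to $f'=(U+V)/2$ forces $h'\equiv -c$, so $h=-cz+\mathrm{const}$. Absorbing the constants into $\alpha,\gamma$ yields $f=c_{1}e^{cz}+c_{2}e^{-cz}$, and substituting back into $UV=a_1^{2}(1-c^{2})$ produces the relation $4c^{2}c_{1}c_{2}=a_1^{2}(c^{2}-1)$, establishing conclusion (iii).
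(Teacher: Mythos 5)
Your setup is sound and coincides with the paper's framework specialized to $k=1$: your $A$ is exactly the auxiliary function $h=\frac{(f'-a_1)(f'-a_2)}{(f-a_1)(f-a_2)}$ of Lemma 3.4, its smallness is indeed immediate here from the logarithmic derivative lemma (in the CM case with $f'$ one does not need the long Lemma 3.5), the dichotomy $A\equiv 1$ versus $A\not\equiv 1$ correctly yields conclusions (i)/(ii), and your endgame — from $A=c^2\neq 1$, $a_1+a_2=0$ to $(f'-cf)(f'+cf)=a_1^2(1-c^2)$ and then to $f=c_1e^{cz}+c_2e^{-cz}$ with $4c^2c_1c_2=a_1^2(c^2-1)$ — is correct. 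The genuine gap is the middle step: you never prove that $A\not\equiv 1$ forces $A$ to be constant and $a_1+a_2=0$; you only write that you ``expect'' the second main theorem at the small targets $a_1,a_2,\infty$ plus a counting of the zeros of $\phi_1\phi_2$ to do this, and you yourself flag it as the main obstacle. That step \emph{is} the theorem: everything before it is bookkeeping and everything after it is routine. As stated, the plan is unlikely to close it, because SMT at $a_1,a_2$ gives no new information (every $a_i$-point of $f$ is already accounted for by the sharing), and your identity $\phi_1\phi_2=(A-1)(f-a_1)(f-a_2)$ has factors $\phi_1,\phi_2$ that vanish at essentially all shared points, so it does not isolate near-zero-free functions from which one could extract the exponential structure or the condition $a_1+a_2=0$. (Also, your phrase ``non-vanishing entire functions'' for $\phi_1,\phi_2$ should read ``not identically zero''; as you note in the next sentence, each of them does vanish at shared points.)

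What the paper's method (proof of Theorem 2.1, of which Theorem B is the $L(f)=f^{(1)}$ prototype) supplies at exactly this point is a square-root factorization that your difference identity cannot reproduce: write $h=e^{2\eta}$ with $e^{\eta}$ small, and factor the sharing relation as $GH=\left(\frac{a_1-a_2}{2}\right)^2(e^{2\eta}-1)$ with $G=e^{\eta}\left(f-\frac{a_1+a_2}{2}\right)+f'-\frac{a_1+a_2}{2}$ and $H=e^{\eta}\left(f-\frac{a_1+a_2}{2}\right)-f'+\frac{a_1+a_2}{2}$. Since $f$ is entire and the right-hand side is small, $N(r,1/G)+N(r,1/H)=S(r,f)$, so the logarithmic derivatives of $G$ and $H$ are small. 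Eliminating $f$ and $f'$ from $G+H$ and $G-H$, and then eliminating $H$ via the product identity, gives a quadratic relation $\Phi_1G^2+\Phi_2G+\Phi_3=0$ whose coefficients are small functions; Mohon'ko's lemma (Lemma 3.2) then forces $\Phi_1\equiv\Phi_2\equiv\Phi_3\equiv 0$, which is precisely what yields $a_1+a_2=0$, then $GH=c_0e^{2\eta}$, hence $e^{2\eta}$ constant, and finally first-order linear equations for $G,H$ that produce $G=2d_1e^{cz}$, $H=2d_2e^{-cz}$ and conclusion (iii). Your proposal contains no device playing the role of $G,H$ and the quadratic-with-small-coefficients argument, so the central claim remains unproven; until that is supplied (or an alternative argument of comparable strength is given), the proof is incomplete.
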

In $2020$, Lahiri \cite{Lahiri & 2020} introduced a new type of set sharing notion called CMW (counting multiplicities in the weak sense)as follows: \vspace{1mm}
\par Let $f$ and $g$ be two non-constant meromorphic functions in $\mathbb{C}$ and $a\in\mathbb{C}\cup\{\infty\}$ and $B\subset \mathbb{C}\cup\{\infty\}$. We denote by $E_B(a;f,g)$ the set of those distinct $a$-points of $f$ which are the $b$-points of $g$ having the same multiplicity for some $b\in B$. For $A\subset\mathbb{C}\cup\{\infty\}$, we put $E_B(A;f,g)=\bigcup_{a\in A} E_B(a;f,g)$. Clearly $E_B(A;f,g)=E_B(A;g,f)$ for $A=B$. For $S\subset\mathbb{C}\cup\{\infty\}$ we define
\beas Y=\{\ol E(S,f)\cup\ol E(S,g)\}\setminus\ol E_S(S,;f,g).\eeas We say that $f$ and $g$ share the set $S$ with counting multiplicities in the weak
sense (CMW) if $N_Y(r,a;f)=S(r,f)$ and $N_Y(r,a;g)=S(r, g)$ for every $a\in S$, where $N_Y(r,a;f)$ denotes the counting function, counted with multiplicities of those $a$-points of $f$ which lie in the set $Y$.\vspace{1mm}
\par We note that $f$ and $g$ share the set $S$ with counting multiplicities if and only if $Y=\emptyset$.\vspace{1mm}
\par Lahiri \cite{Lahiri & 2020} greatly improved Theorem B by considering the higher order derivative $f^{(k)}$ and CMW in place of CM set sharing and proved the following theorem. 
\begin{theoC}\cite{Lahiri & 2020}
Let $f$ be a non-constant entire function and $k$ be a positive integer such that \bea\label{e1.1} \ol N\left(r,\frac{f^{(k)}}{f^{(1)}}\right)=S(r,f).\eea Suppose that $a_1$ and $a_2$ are two distinct finite complex numbers. If $f$ and $f^{(k)}$ share the set $\{a_1, a_2\}$ CMW, then only one of the following holds:
\begin{enumerate}
\item [(i)] $f=f^{(k)}$
\item [(ii)] $f+f^{(k)}=a_1+a_2$
\item [(iii)] $f=c_1e^{cz}+c_2e^{-cz}$ with $a_1+a_2=0$, where $c$, $c_1$ and $c_2$ are non-zero constants satisfying $c^{2k}\neq 1$ and $4c^{2k}c_1c_2=a_1^2(c^{2k}-1)$ and $k$ is an odd positive integer.
\end{enumerate}
\end{theoC}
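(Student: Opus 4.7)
The plan is to mimic the strategy behind Theorem B, but to use the CMW hypothesis and the smallness of $\overline{N}(r, f^{(k)}/f^{(1)})$ as the tools that make the argument work for the higher-order derivative $f^{(k)}$. I would start by setting $P = (f-a_1)(f-a_2)$ and $Q = (f^{(k)}-a_1)(f^{(k)}-a_2)$, so that the set-sharing of $\{a_1, a_2\}$ translates into a zero-sharing statement between $P$ and $Q$. The CMW hypothesis gives that, outside the exceptional set $Y$ whose counting functions are $S(r,f)$, every zero of $P$ is a zero of $Q$ of the same multiplicity and vice versa. Hence the logarithmic derivative difference $P'/P - Q'/Q$ has only $S(r,f)$-many poles and, by the lemma on logarithmic derivatives combined with the fact that $f$ (and thus $P$, $Q$, $f^{(k)}$) are entire, is itself of Nevanlinna characteristic $S(r,f)$.

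Next, I would promote this to an algebraic identity. The quotient $P/Q$ is a meromorphic function whose zeros and poles all lie in $Y$, so its Nevanlinna characteristic is $S(r,f)$. This produces a relation of the form
\[
(f-a_1)(f-a_2) = R(z)\,\bigl(f^{(k)}-a_1\bigr)\bigl(f^{(k)}-a_2\bigr),
\]
with $T(r,R) = S(r,f)$. To upgrade $R$ to a non-zero constant $c$, I would differentiate and invoke the hypothesis \eqref{e1.1}: the point is that the extra zeros that could appear at common critical points of $f$ are controlled precisely because most zeros of $f^{(1)}$ are zeros of $f^{(k)}$, so the Tumura--Clunie type estimate on $R'/R$ forces $R' \equiv 0$.

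Once $R \equiv c$, the problem reduces to analyzing
\[
f^2 - (a_1 + a_2) f + a_1 a_2 = c\bigl((f^{(k)})^2 - (a_1 + a_2)f^{(k)} + a_1 a_2\bigr).
\]
Differentiating this relation and combining it with the identity itself yields a polynomial relation between $f$, $f^{(1)}$, $f^{(k)}$ and $f^{(k+1)}$. By comparing characteristic functions, one of the factors $f - f^{(k)}$ or $f + f^{(k)} - (a_1 + a_2)$ must vanish identically, unless the remaining factor leads to a linear second-order ODE for $f$ with constant coefficients. Cases $c = 1$ and a linear split produce (i) and (ii) respectively. In the remaining case, the constraint $a_1 + a_2 = 0$ is forced (by examining the constant term of the factored identity), and the general entire solution of the resulting ODE is $f = c_1 e^{cz} + c_2 e^{-cz}$; substituting back into the algebraic identity and matching the free coefficient gives $4c^{2k} c_1 c_2 = a_1^2 (c^{2k} - 1)$. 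Checking that this case is consistent with sharing $\{a_1, -a_1\}$ CMW rather than automatically reducing to (i) or (ii) requires $c^{2k} \ne 1$, and parity bookkeeping on $c^k$ (the identity $f^{(k)} = c^k(c_1 e^{cz} + (-1)^k c_2 e^{-cz})$) forces $k$ to be odd.

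The main obstacle I anticipate is the step from $T(r, R) = S(r,f)$ to $R \equiv \text{const}$; the derivation chain $f \to f' \to \cdots \to f^{(k)}$ can create multiple points at zeros of $f^{(1)}$ that spoil the naive comparison between $P'/P$ and $Q'/Q$, and it is precisely this obstruction that the hypothesis $\overline{N}(r, f^{(k)}/f^{(1)}) = S(r,f)$ is designed to remove. A careful bookkeeping of ramification, applied to the Wronskian-type expression built from $f - a_j$ and $f^{(k)} - a_j$, should convert this hypothesis into the required smallness of $R'/R$ and close the argument.
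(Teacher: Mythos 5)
Your outline follows the same broad strategy as the paper's (which proves Theorem C as the special case $L(f)=f^{(k)}$ of Theorem \ref{t1}): form the quotient $h=\frac{(f^{(k)}-a_1)(f^{(k)}-a_2)}{(f-a_1)(f-a_2)}$, show it is small, and then analyse the resulting algebraic identity. But there is a genuine gap at the decisive step. You claim that because all zeros and poles of $P/Q$ lie in the exceptional set $Y$, its Nevanlinna characteristic is $S(r,f)$. That inference is false: the CMW hypothesis only controls $N(r,h)+N(r,1/h)$, and this says nothing about the proximity function $m(r,h)$ --- a zero-free, pole-free function such as $e^{g}$ can have characteristic comparable to $T(r,f)$. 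Establishing $T(r,h)=S(r,f)$ is precisely the hard core of the whole argument; the paper devotes Lemmas \ref{lem3.4} and \ref{lem3.5} to it, constructing the auxiliary small function $\Psi$ from the factors $f^{(1)}h\mp L^{(1)}(f)$, then the further functions $\Psi_1,\Psi_2,\Psi_3$, and deriving a contradiction from the assumption $T(r,h)\neq S(r,f)$ by a delicate comparison of $T(r,f)$ and $T(r,L(f))$. Nothing in your sketch replaces this machinery, and the same objection applies to your subsequent claim that a ``Tumura--Clunie type estimate on $R'/R$'' forces $R'\equiv0$: smallness of the logarithmic derivative of $h$ does not make $h$ constant, and in fact constancy of $h$ is not available (nor needed) at that stage --- it only emerges much later, in the case leading to conclusion (iii).

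Your proposed role for hypothesis \eqref{e1.1} is also misplaced. In the paper it is not used to kill $R'$; it is used, after $T(r,h)=S(r,f)$ is known and one writes $h=e^{2\eta}$ with $e^{\eta}$ small and $GH=\left(\frac{a_1-a_2}{2}\right)^2(e^{2\eta}-1)$, to show that $b=f^{(1)}/f^{(k)}$ is a small function. This is what allows one to eliminate $f$ and $f^{(1)}$ from the two linear relations $G+H=e^{\eta}(2f-a_1-a_2)$ and $G-H=2f^{(k)}-a_1-a_2$, producing a quadratic in $G$ with small coefficients; forcing those coefficients to vanish yields $a_1+a_2=0$, the constancy $e^{2\eta}=A$, linear ODEs with constant coefficients showing that the zero-free functions $G,H$ are exponentials of degree-one polynomials, and finally $f=c_1e^{cz}+c_2e^{-cz}$ with $c^{2k}\neq1$, $4c^{2k}c_1c_2=a_1^2(c^{2k}-1)$ and $k$ odd. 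Your endgame is compatible in outline with this, but since it rests on the two unjustified steps above, the proof as proposed does not go through.
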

\par For further investigation of the above theorem, we now define a linear differential polynomial $L(f)$ and a differential monomial $M(f)$ of an entire function $f$ as follows: \bea\label{e1.1a}L(f)=b_1f^{(1)}+b_2f^{(2)}+\cdots+b_kf^{(k)}=\sum_{j=1}^{k}b_jf^{(j)},\eea where $b_1,b_2,\ldots,b_k(\neq0)$ are complex constants, and \bea\label{e1.1b} M(f)=(f^{(1)})^{n_1}(f^{(2)})^{n_2}\cdots(f^{(k)})^{n_k},\eea where $k$ ia a positive integer and $n_1,n_2,\ldots, n_k$ are non-negative integers, not all of them are zero. We call $k$ and $\lambda=\sum_{j=1}^{k}n_j$, respectively the order and the degree of the monomial $M(f)$. 
\par From the above discussion it is natural to ask the following questions.
\begin{ques}
What can be said about the uniqueness when an entire function $f$ share two values jointly CMW with its linear differential polynomial $L(f)$?
\end{ques}
\begin{ques}
What can be said about the uniqueness of an entire function $f$ when $f$ share two values jointly CMW with its differential monomial $M(f)$?
\end{ques}
\par In the present paper, we prove the following results which will answer the above questions positively. We use a methodology which is similar to \cite{Lahiri & 2020} but with some modifications.
\section{Main results}
\begin{theo}\label{t1}
Let $f$ is a non-constant entire function and $L(f)$ be a linear differential polynomial defined as in $(\ref{e1.1a})$ such that  \bea\label{e2.1}\ol N\left(r,\frac{L(f)}{f^{(1)}}\right)=S(r,f).\eea Suppose that $a_1$ and $a_2$ are two distinct finite complex numbers. If $f$ and $L(f)$ share the set $\{a_1, a_2\}$ CMW, then only one of the following holds:
\begin{enumerate}
\item [(i)] $f=L(f)$
\item [(ii)] $f+L(f)=a_1+a_2$
\item [(iii)] $f=c_1e^{cz}+c_2e^{-cz}$ with $a_1+a_2=0$, where $c$, $c_1$ and $c_2$ are non-zero constants satisfying $(b_1c+b_3c^3+\cdots+b_kc^k)^2\neq 1$ and $4(b_1c+b_3c^3+\cdots+b_kc^k)^2c_1c_2=a_1^2((b_1c+b_3c^3+\cdots+b_kc^k)^2-1)$ and $k$ is an odd positive integer.
\end{enumerate}	
\end{theo}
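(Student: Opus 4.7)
The plan is to closely adapt Lahiri's methodology from the proof of Theorem~C by replacing the single derivative $f^{(k)}$ with the linear differential polynomial $L(f)$ throughout; hypothesis (\ref{e2.1}) takes on the role that (\ref{e1.1}) plays in \cite{Lahiri & 2020}, namely to control via the logarithmic derivative lemma those zeros of $L(f)-a_j$ that are not zeros of $f^{(1)}$. The central auxiliary function is
\[
\phi \;=\; \frac{\bigl((f-a_1)(f-a_2)\bigr)'}{(f-a_1)(f-a_2)} \;-\; \frac{\bigl((L(f)-a_1)(L(f)-a_2)\bigr)'}{(L(f)-a_1)(L(f)-a_2)},
\]
i.e.\ the logarithmic derivative of $(f-a_1)(f-a_2)\big/\bigl((L(f)-a_1)(L(f)-a_2)\bigr)$. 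The CMW hypothesis ensures that, outside a set whose counting function is $S(r,f)$, every $a_i$-point of $f$ of multiplicity $m$ is an $a_j$-point of $L(f)$ of the same multiplicity, so the simple poles of the two summands cancel; combined with the log-derivative lemma and the bound $T(r,L(f))=T(r,f)+S(r,f)$, this yields $T(r,\phi)=S(r,f)$.

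I would then split into the two sub-cases $\phi\equiv 0$ and $\phi\not\equiv 0$. In the first, integration produces a non-zero constant $C$ with
\[
(f-a_1)(f-a_2)\;=\;C\,(L(f)-a_1)(L(f)-a_2).
\]
When $C=1$ this identity factors as $(f-L(f))\bigl(f+L(f)-(a_1+a_2)\bigr)=0$, giving conclusions (i) or (ii). When $C\neq 1$, comparing characteristic functions forces $a_1+a_2=0$; using (\ref{e2.1}) together with the resulting algebraic relation one shows $f=c_1e^{cz}+c_2e^{-cz}$ for non-zero constants $c,c_1,c_2$. Substituting into $f^2-a_1^2=C\bigl(L(f)^2-a_1^2\bigr)$ and matching coefficients of $e^{\pm 2cz}$ and the constant term gives $A^2=B^2=1/C$ and $A=-B$, where $A=\sum_j b_jc^j$ and $B=\sum_j b_j(-c)^j$. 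The condition $A=-B$ makes the even-index partial sum $b_2c^2+b_4c^4+\cdots$ vanish, forces $k$ to be odd, and reduces $A$ to $b_1c+b_3c^3+\cdots+b_kc^k$; together with the constant-term comparison one recovers exactly $A^2\neq 1$ and $4A^2c_1c_2=a_1^2(A^2-1)$ as in (iii). In the sub-case $\phi\not\equiv 0$, I would use $T(r,\phi)=S(r,f)$, the CMW-based equality $\ol N(r,a_i;f)=\ol N(r,a_i;L(f))+S(r,f)$, (\ref{e2.1}) applied to $\ol N(r,0;f^{(1)})$, and the second fundamental theorem for $f$ at $\{a_1,a_2,\infty\}$, to derive a contradictory upper bound on $T(r,f)$ and thereby rule out this branch entirely.

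The main obstacle is the $C\neq 1$ sub-case. For Theorem~C only the monomial $c^k$ appears and the parity of $k$ is immediate; here the polynomial identity $A=-B$ involves the full coefficient tuple $(b_1,\dots,b_k)$, and one must argue carefully that the only compatible configuration is $k$ odd with $b_2c^2+b_4c^4+\cdots+b_{k-1}c^{k-1}=0$ (while keeping $A\neq 0$), so that the exceptional family matches the description in (iii) exactly. Verifying that (\ref{e2.1}) suffices in place of the stronger (\ref{e1.1}), and that no spurious solutions arise from the extra flexibility of $L(f)$ over $f^{(k)}$, is where the principal new work lies relative to \cite{Lahiri & 2020}.
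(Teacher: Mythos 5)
Your reduction to the dichotomy $\phi\equiv 0$ versus $\phi\not\equiv 0$, where $\phi=-h^{(1)}/h$ and $h=\frac{(L(f)-a_1)(L(f)-a_2)}{(f-a_1)(f-a_2)}$, does not carry the weight you place on it. The easy part is $T(r,\phi)=S(r,f)$; the hard part, which is the bulk of the paper's argument, is to show that $h$ is actually constant. You propose to kill the branch $\phi\not\equiv 0$ by a quick second-fundamental-theorem argument, but no contradiction is available there: at a point where $f$ takes the value $a_i$ and $L(f)$ takes the value $a_j$ with the same multiplicity, $\phi$ is merely analytic, not zero, so the smallness of $\phi$ gives no control on $\ol N\left(r,\frac{1}{f-a_i}\right)$, and the second fundamental theorem only returns $T(r,f)\le \ol N\left(r,\frac{1}{f-a_1}\right)+\ol N\left(r,\frac{1}{f-a_2}\right)+S(r,f)\le 2T(r,f)+S(r,f)$, which is not contradictory. (Moreover, set sharing does not give $\ol N\left(r,\frac{1}{f-a_i}\right)=\ol N\left(r,\frac{1}{L(f)-a_i}\right)+S(r,f)$ for each $i$ separately, since an $a_1$-point of $f$ may be an $a_2$-point of $L(f)$; only the sum over $i$ is preserved.) The conclusions (i)--(iii) all correspond to constant $h$, so this branch must indeed be excluded, but excluding it is exactly where the paper invests its effort: Lemmas \ref{lem3.4} and \ref{lem3.5} (the auxiliary functions $\Psi,\Psi_1,\Psi_2,\Psi_3$ and a delicate case analysis) first show $T(r,h)=S(r,f)$, and only afterwards does the proof of Theorem \ref{t1}, via the factorization $GH=\left(\frac{a_1-a_2}{2}\right)^2(e^{2\eta}-1)$, the hypothesis (\ref{e2.1}) (used to make $b=f^{(1)}/L(f)$ a small function), an elimination producing a quadratic in $G$ with small coefficients, Lemma \ref{lem3.2} (Mohon'ko), and an integration, conclude that $e^{2\eta}$ is constant and $a_1+a_2=0$.

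The same gap reappears inside your $\phi\equiv 0$, $C\neq 1$ subcase: you assert that ``comparing characteristic functions forces $a_1+a_2=0$'' and that ``one shows $f=c_1e^{cz}+c_2e^{-cz}$'', but comparing characteristics of $(f-a_1)(f-a_2)=C(L(f)-a_1)(L(f)-a_2)$ only yields $T(r,f)=T(r,L(f))+O(1)$; obtaining $a_1+a_2=0$ requires the elimination/Mohon'ko step just described, and obtaining the exponential shape of $f$ requires the differential-equation argument (the identities $\sum_{j=1}^{k}b_jG^{(j)}=\sqrt{A}\,G$ and $\sum_{j=1}^{k}b_jH^{(j)}=-\sqrt{A}\,H$, Lemma \ref{lem3.3} forcing $G,H$ to be zero-free entire functions of finite order, hence $G=2d_1e^{cz}$, $H=2d_2e^{-cz}$). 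The coefficient matching you single out as the ``main obstacle'' (the relation $A=-B$, the parity of $k$, and $4A^2c_1c_2=a_1^2(A^2-1)$) is in fact the routine end of the argument, and it is carried out in the paper exactly as you describe; the genuinely hard steps --- that $h$ is constant, that $a_1+a_2=0$, and that $f$ has the exponential form --- are precisely the ones your sketch leaves unsupported.
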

\begin{theo}\label{t2}
Let $f$ is a non-constant entire function and $M(f)$ be a differential monomial defined as in $(\ref{e1.1b})$ such that  \bea\label{e2.1a}\ol N\left(r,\frac{M(f)}{(f^{\lambda})^{(1)}}\right)=S(r,f).\eea Suppose that $a_1$ and $a_2$ are two distinct finite complex numbers. If $f^{\lambda}$ and $M(f)$ share the set $\{a_1, a_2\}$ CMW, then only one of the following holds:
\begin{enumerate}
\item [(i)] $f^{\lambda}=M(f)$
\item [(ii)] $f^{\lambda}+M(f)=a_1+a_2$
\item [(iii)] $f^{\lambda}=c_1e^{cz}+c_2e^{-cz}$, $M(f)=\sqrt{A}(c_1e^{2cz}-c_2)/e^{cz}$ with $a_1+a_2=0$, where $A$, $c$, $c_1$ and $c_2$ are non-zero constants and $\lambda=\sum_{j=1}^{k}n_j$.
\end{enumerate}	
\end{theo}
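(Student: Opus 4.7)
The plan is to adapt Lahiri's strategy for Theorem C, replacing the single derivative $f^{(k)}$ by the differential monomial $M(f)$ and the function $f$ by $f^{\lambda}$. Set $F:=f^{\lambda}$ and $G:=M(f)$; both are entire, and since $F^{(1)}=\lambda f^{\lambda-1}f^{(1)}$, the hypothesis \eqref{e2.1a} is exactly what is needed to control $G/F^{(1)}$. Writing $P(w)=(w-a_1)(w-a_2)$, introduce the key auxiliary function
\[
\Phi \;=\; \frac{P(G)}{P(F)} \;=\; \frac{(M(f)-a_1)(M(f)-a_2)}{(f^{\lambda}-a_1)(f^{\lambda}-a_2)}.
\]
The CMW hypothesis on the set $\{a_1,a_2\}$ implies that, outside an exceptional set whose counting function is $S(r,f)$, the zeros of $P(F)$ and $P(G)$ coincide with equal multiplicities; hence both $N(r,\Phi)$ and $N(r,1/\Phi)$ are $S(r,f)$.

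The next step is to show that $\Phi$ is a small function with respect to $f$. By the lemma on the logarithmic derivative together with \eqref{e2.1a},
\[
m\!\left(r,\frac{G}{F}\right)\;\le\; m\!\left(r,\frac{G}{F^{(1)}}\right)+m\!\left(r,\frac{F^{(1)}}{F}\right)=S(r,f),
\]
and a standard expansion of the quadratic numerator and denominator of $\Phi$ in terms of $G/F$, $1/F$, and $1/G$ (combined with a Mohon'ko-type estimate using the zero count of $P(F)$) gives $m(r,\Phi)=S(r,f)$, hence $T(r,\Phi)=S(r,f)$.

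Now split on the behaviour of $\Phi$. \emph{If} $\Phi\equiv 1$, the identity $P(G)=P(F)$ factors as $(G-F)(G+F-a_1-a_2)=0$; since both factors are entire, one vanishes identically, producing conclusion (i) or (ii). \emph{If} $\Phi\not\equiv 1$, differentiate $P(G)=\Phi\,P(F)$ and compare characteristic functions: the smallness of $\Phi$ and $\Phi^{(1)}$, coupled with a careful bookkeeping of zeros and poles of $P(F)$, $P(G)$ and their derivatives (along the lines of Lahiri's argument in Theorem C), forces $\Phi$ to be a non-unit constant $A\neq 1$. A symmetry/parity argument on the zero distributions of $P(F)$ and $P(G)$ then forces $a_1+a_2=0$. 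Setting $a_2=-a_1$, the identity becomes the Pell-type equation
\[
M(f)^{2}-A\,(f^{\lambda})^{2}=a_1^{2}(1-A),
\]
i.e.\ $(M(f)-\sqrt{A}f^{\lambda})(M(f)+\sqrt{A}f^{\lambda})=a_1^{2}(1-A)$. Both factors are entire and their product is a non-zero constant, so each is a non-vanishing entire exponential, say $e^{h(z)}$ and $a_1^{2}(1-A)e^{-h(z)}$. The hypothesis \eqref{e2.1a} together with the fact that $M(f)$ is built from derivatives of $f$ forces $h$ to be linear, $h(z)=cz+\text{const}$; absorbing the constant produces $f^{\lambda}=c_1e^{cz}+c_2e^{-cz}$ and consequently $M(f)=\sqrt{A}(c_1e^{2cz}-c_2)/e^{cz}$, the form in conclusion (iii). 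One verifies at the end that this is consistent with the relation $M(f)=(\sqrt{A}/c)\,(f^{\lambda})^{(1)}$ predicted by \eqref{e2.1a}.

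The principal obstacle will be the subcase $\Phi\not\equiv 1$: first, showing that $\Phi$ is forced to be a constant rather than a genuine small function, and second, once the Pell-type identity is obtained, forcing the exponent $h$ to be linear. The nonlinearity of $M(f)$ in the derivatives of $f$ — in contrast to the single derivative treated in Theorem C — is what makes this step delicate and is the source of the ``modifications'' alluded to in the introduction; it also explains the appearance of the extra parameter $\sqrt{A}$ (and not just an integer power of $c$) in conclusion (iii).
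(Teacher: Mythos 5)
Your outline reproduces the paper's overall architecture — the same auxiliary quotient $h_1=\frac{(M(f)-a_1)(M(f)-a_2)}{(f^{\lambda}-a_1)(f^{\lambda}-a_2)}$ (your $\Phi$), the same dichotomy between the trivial and nontrivial case of this quotient, and the same terminal factorization into two non-vanishing entire factors — but the three steps that carry all the weight are asserted rather than proved, and your sketch for the first one does not work. For the smallness $T(r,\Phi)=S(r,f)$: expanding $m(r,\Phi)$ as you propose reduces everything to $m\left(r,\frac{1}{(f^{\lambda}-a_1)(f^{\lambda}-a_2)}\right)$, and by Lemma \ref{lem3.7}(iv) that quantity \emph{is} the main term of $T(r,h_1)$ up to $S(r,f)$; neither the lemma on the logarithmic derivative nor a Mohon'ko-type estimate controls it, so your justification is circular. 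In the paper this is precisely the hard part: the analogues of Lemmas \ref{lem3.4} and \ref{lem3.5} (Lemmas \ref{lem3.7} and \ref{lem3.8}) construct the auxiliary functions $\Psi,\Psi_1,\Psi_2,\Psi_3$, show that almost all zeros of $(M(f)-a_1)(M(f)-a_2)$ are simple, and reach a contradiction in two separate cases before $T(r,h_1)=S(r,f)$ can be concluded.

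Even granting smallness, your claim that ``careful bookkeeping'' forces $\Phi$ to be a non-unit constant and that a ``symmetry/parity argument'' gives $a_1+a_2=0$ names no mechanism. The paper obtains both only through a further elimination: write $h_1=e^{2\xi}$ with $e^{\xi}$ small, factor to get $G_1H_1=\left(\frac{a_1-a_2}{2}\right)^2(e^{2\xi}-1)$, use hypothesis \eqref{e2.1a} (together with the logarithmic derivative lemma) to make $\mu=M(f)/(f^{\lambda})^{(1)}$ a small function, eliminate $f^{\lambda}$ and $(f^{\lambda})^{(1)}$ to get a quadratic identity in $G_1$ with small coefficients, and apply Lemma \ref{lem3.2} to force all coefficients to vanish; this is where $a_1+a_2=0$ and, after integration, $e^{2\xi}=A$ constant come from, and it is the only place where \eqref{e2.1a} genuinely enters — in your outline it plays no identifiable role. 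Likewise, in your final step the exponent $h$ in $M(f)-\sqrt{A}f^{\lambda}=e^{h}$ must be shown to be a polynomial of degree one; ``the hypothesis forces $h$ to be linear'' is not an argument, since a priori $e^{h}$ could have infinite order. The paper gets this from the first-order relations $\mu G_1^{(1)}=\sqrt{A}\,G_1$ and $\mu H_1^{(1)}=\sqrt{A}\,H_1$ produced by the vanishing coefficients, Lemma \ref{lem3.3} to secure finite order, and a degree comparison giving $p=q=1$. Until these three gaps are filled, your proposal is a plan rather than a proof.
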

We give the following examples in the support of the main theorems.
\begin{exm}
Let $f=e^{\omega z}+a_1+a_2$, where $\omega^{k}=-1$, $k$ is a positive integer and $a_1,\; a_2$ are any two finite distinct complex constants. and $L(f)=M(f)=f^{(k)}$. Then all the conditions of Theorems \ref{t1} and \ref{t2} are satisfied. Here conclusion $(ii)$ of Theorems \ref{t1} and \ref{t2} holds. 
\end{exm}
\begin{exm}
Let $f=e^{\lambda z}$, where $\lambda^{5}=1$ and $L(f)=M(f)=f^{(5)}$. Then all the conditions of the above two theorems are satisfied and conclusion $(i)$ of the above two theorems holds. 
\end{exm}
\begin{rem}
By taking $L(f)=f^{(k)}$ in Theorem \ref{t1}, we get Theorem C, which is a particular case of our result.
\end{rem}
\section{Key lemmas} In this section, we present some necessary lemmas which will be required to prove the main results.
\begin{lem}\label{lem3.1}
Let $f$ be a non-constant entire function and $a_1$, $a_2$ be two distinct finite complex numbers. If $f$ and $L(f)$ share the set $\{a_1, a_2\}$ CMW, then $S(r,L(f))=S(r ,f)$.
\end{lem}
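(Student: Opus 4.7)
The goal is to show that the Nevanlinna characteristics $T(r,f)$ and $T(r,L(f))$ are comparable up to small terms, which is exactly what $S(r,L(f)) = S(r,f)$ means. One direction is routine: since $L(f) = \sum_{j=1}^{k} b_j f^{(j)}$ is a constant-coefficient linear combination of derivatives of $f$, the lemma on the logarithmic derivative together with the standard bound $T(r,f^{(j)}) \leq (j+1)\,T(r,f) + S(r,f)$ (valid because $f$ is entire) yields
\be
T(r,L(f)) \leq C\, T(r,f) + S(r,f)
\ee
for a constant $C$ depending only on $k$. This inclusion alone already gives $S(r,f) \subseteq S(r,L(f))$.

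The substantive direction is to bound $T(r,f)$ in terms of $T(r,L(f))$. My plan is to apply Nevanlinna's second main theorem to $f$ at the three target values $a_1$, $a_2$, $\infty$. Since $f$ is entire, $\overline{N}(r,\infty;f) = 0$, and so
\be
T(r,f) \leq \overline{N}(r,a_1;f) + \overline{N}(r,a_2;f) + S(r,f).
\ee
I would then exploit the CMW hypothesis to transfer these counting functions from $f$ to $L(f)$. By construction, every $a_i$-point of $f$ that does not lie in the exceptional set $Y$ is an $a_j$-point of $L(f)$ with the same multiplicity for some $j \in \{1,2\}$, and the CMW condition is precisely that $N_Y(r,a_i;f) = S(r,f)$. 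Consequently
\be
\overline{N}(r,a_1;f) + \overline{N}(r,a_2;f) \leq \overline{N}(r,a_1;L(f)) + \overline{N}(r,a_2;L(f)) + S(r,f),
\ee
and each term on the right is at most $T(r,L(f)) + O(1)$ by the first main theorem.

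Chaining the inequalities delivers $T(r,f) \leq 2\, T(r,L(f)) + S(r,f) + O(1)$, which combined with the routine bound in the first paragraph shows $T(r,f) \asymp T(r,L(f))$ outside the usual exceptional set of finite linear measure, and therefore $S(r,f) = S(r,L(f))$ as classes of error terms.

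I do not expect a major conceptual obstacle here; the only delicate part is the notational bookkeeping around the CMW definition. Specifically, I need to verify carefully that the exceptional set $Y = \{\overline{E}(S,f) \cup \overline{E}(S,L(f))\} \setminus \overline{E}_S(S;f,L(f))$ really captures every $a_i$-point of $f$ that fails to be matched to an $\{a_1,a_2\}$-point of $L(f)$ with identical multiplicity, so that the transfer inequality above costs only $S(r,f)$. Once that verification is in place, the combination of SMT with the CMW estimate finishes the argument.
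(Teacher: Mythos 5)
Your proposal is correct and follows essentially the same route as the paper: the easy direction via the lemma on the logarithmic derivative (the paper gets $T(r,L(f))\leq T(r,f)+S(r,f)$ directly from $m(r,L(f)/f)=S(r,f)$, which is slightly sharper than your constant $C$ but makes no difference for the conclusion), and the substantive direction by applying the second main theorem to $f$ at $a_1$, $a_2$ and transferring the reduced counting functions to $L(f)$ using the CMW hypothesis, giving $T(r,f)\leq 2T(r,L(f))+S(r,f)$.
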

\begin{proof}
Since $f$ is entire, we have \bea\label{e3.1} T(r,L(f))&=&m(r,L(f))\leq m\left(r,\frac{L(f)}{f}\right)+m(r,f)\nonumber\\&\leq& T(r,f)+S(r,f).\eea
Again since $f$ and $L(f)$ share the set $\{a_1,a_2\}$ CMW, we get by second fundamental theorem 
\bea\label{e3.2} T(r,f)&\leq& \ol N\left(r,\frac{1}{f-a_1}\right)+\ol N\left(r,\frac{1}{f-a_2}\right)+S(r,f)\nonumber\\&\leq&2T(r,L(f))+S(r,f).\eea From \ref{e3.1} and \ref{e3.2}, we conclude that $S(r,L(f))=S(r,f)$. This proves the lemma.
\end{proof}
\begin{lem}\label{lem3.6}
	Let $f$ be a non-constant entire function and $a_1$, $a_2$ be two distinct finite complex numbers. If $f^{\lambda}$ and $M(f)$, where $\lambda=\sum_{j=1}^{k}n_j$ and $M(f)$ is defined as in $(\ref{e1.1b})$ share the set $\{a_1, a_2\}$ CMW, then $S(r,M(f))=S(r,f)$.
\end{lem}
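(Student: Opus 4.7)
The plan is to mimic the proof of Lemma~\ref{lem3.1}, with the roles of $f$ and $L(f)$ replaced by $f^{\lambda}$ and $M(f)$. First I would establish a two-sided comparison between $T(r, f)$ and $T(r, M(f))$ modulo $S(r, f)$, from which the conclusion $S(r, M(f)) = S(r, f)$ follows immediately.

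For the upper direction, since $f$ is entire so is $M(f)$, whence $T(r, M(f)) = m(r, M(f)) \leq m(r, M(f)/f^{\lambda}) + m(r, f^{\lambda})$. The quotient factors as
\beas \frac{M(f)}{f^{\lambda}} = \prod_{j=1}^{k} \left(\frac{f^{(j)}}{f}\right)^{n_j}, \eeas
so the logarithmic derivative lemma, applied term by term, gives $m(r, M(f)/f^{\lambda}) = S(r, f)$. Since $m(r, f^{\lambda}) = T(r, f^{\lambda}) = \lambda T(r, f)$, we obtain $T(r, M(f)) \leq \lambda T(r, f) + S(r, f)$.

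For the reverse direction, I would apply Nevanlinna's second fundamental theorem to $f^{\lambda}$ with targets $a_1, a_2$ to get
\beas T(r, f^{\lambda}) \leq \ol N\!\left(r, \frac{1}{f^{\lambda}-a_1}\right) + \ol N\!\left(r, \frac{1}{f^{\lambda}-a_2}\right) + S(r, f^{\lambda}), \eeas
and then use the CMW hypothesis to transfer the count to $M(f)$: apart from a ``bad'' set of $a_i$-points whose contribution is $S(r, f^{\lambda}) = S(r, f)$, every $a_i$-point of $f^{\lambda}$ lies in $\ol E_S(S; f^{\lambda}, M(f))$ and is therefore a $b$-point of $M(f)$ for some $b \in \{a_1, a_2\}$. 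Hence the right-hand side is bounded by $2T(r, M(f)) + S(r, f)$, yielding $\lambda T(r, f) \leq 2T(r, M(f)) + S(r, f)$. Combining with the upper bound shows that $T(r, f)$ and $T(r, M(f))$ have the same order of growth, so $S(r, M(f)) = S(r, f)$.

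The main delicacy, mild as it is, lies in the counting-function transfer. The CMW definition controls both $N_Y(\cdot, a_i; f^{\lambda})$ by $S(r, f^{\lambda})$ and $N_Y(\cdot, a_i; M(f))$ by $S(r, M(f))$; since $S(r, M(f)) = S(r, f)$ is exactly the conclusion being proved, one must take care to use only the $S(r, f^{\lambda}) = S(r, f)$ side of the definition when bounding $\ol N(r, 1/(f^{\lambda} - a_i))$ from above. The rest of the argument is a routine adaptation of Lemma~\ref{lem3.1}, the only extra input being that the logarithmic-derivative proximity estimate extends multiplicatively to $M(f)/f^{\lambda}$.
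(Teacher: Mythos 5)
Your proposal is correct and follows essentially the same route as the paper, which simply adapts the proof of Lemma \ref{lem3.1} (upper bound via $m(r,M(f)/f^{\lambda})=S(r,f)$ by the logarithmic derivative lemma, lower bound via the second fundamental theorem applied to $f^{\lambda}$ at $a_1,a_2$ and the CMW transfer to $M(f)$). Your extra remark about using only the $S(r,f^{\lambda})=S(r,f)$ side of the CMW definition to avoid circularity is a sensible precaution, but it is the same argument the paper intends.
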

\begin{proof}
	The proof of the lemma can be carried out in the line of the proof of Lemma \ref{e3.1}. So, we omit the details.
\end{proof}
\begin{lem}\label{lem3.2}\cite{Mohonko & 1971, Yan & Yi & 2003}
Let $f$ be a non-constant meromorphic function and $R(f)=P(f)/Q(f)$, where $P(f)=\sum_{k=0}^{p}a_kf^{k}$ and $Q(f)=\sum_{j=0}^{q}b_jf^j$ are two mutually prime polynomials in $f$. If $T(r,a_k)=S(r,f)$ and $T(r,b_j)=S(r,f)$ for $k=0,1,2,\ldots,p$ and $j=0,1,2, \ldots, q$ and $a_p\not\equiv0$, $b_q\not\equiv0$, then
$T(r,R(f))=\text{max}\{p, q\}T(r,f)+S(r,f)$.
\end{lem}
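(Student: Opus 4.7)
The plan is to prove the identity $T(r, R(f)) = d\, T(r, f) + S(r, f)$, with $d = \max\{p, q\}$, by establishing matching upper and lower bounds. The starting point is to factor $P$ and $Q$ over the algebraic closure of the field of small meromorphic functions with respect to $f$, writing $P(w) = a_p \prod_{i} (w - \alpha_i)$ and $Q(w) = b_q \prod_{j}(w - \beta_j)$; by coprimality of $P$ and $Q$, the sets $\{\alpha_i\}$ and $\{\beta_j\}$ are disjoint, and a classical fact guarantees that every $\alpha_i, \beta_j$ is itself a small function with respect to $f$.

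For the upper bound $T(r, R(f)) \leq d\, T(r, f) + S(r, f)$, I would first reduce to the case $p \geq q$ (swapping $R$ for $1/R$ if $q > p$, which only costs an $O(1)$ in the characteristic), and then use polynomial long division to write $R(w) = H(w) + G(w)/Q(w)$ with $\deg H = p - q$ and $\deg G < q$. The polynomial part contributes at most $(p - q) T(r, f) + S(r, f)$ to $T(r, R(f))$, while the proper rational remainder $G(f)/Q(f)$ vanishes at infinity and therefore contributes to the proximity function only via the $\beta_j$-values of $f$, and to the counting function only via the zeros of $Q(f)$ and (if $f$ is meromorphic with poles) the multiplicative factor $p - q$ at poles of $f$; a careful bookkeeping of these two contributions yields at most $q\, T(r, f) + S(r, f)$, and summing gives the desired bound.

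For the lower bound $T(r, R(f)) \geq d\, T(r, f) + S(r, f)$, I would choose generic complex constants $c_1, \ldots, c_s$ so that each polynomial $P(w) - c_k Q(w)$ has full degree $d$ with $d$ distinct roots $\gamma_l(c_k)$, and so that all $sd$ roots are pairwise distinct and disjoint from $\{\beta_j\}$. Because of coprimality and the disjointness of the zero sets, one gets the exact identity $N(r, 1/(R(f) - c_k)) = \sum_{l} N(r, 1/(f - \gamma_l(c_k)))$. Applying Nevanlinna's second fundamental theorem to $f$ with the $sd$ small-function targets $\gamma_l(c_k)$ yields $(sd - 2) T(r, f) \leq \sum_k \overline N(r, 1/(R(f) - c_k)) + S(r, f) \leq s\, T(r, R(f)) + S(r, f)$, whence $T(r, R(f)) \geq (d - 2/s) T(r, f) + S(r, f)$. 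Letting $s$ grow and combining with the upper bound produces the claimed equality.

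The main obstacle is ensuring that the algebraic objects $\alpha_i$, $\beta_j$, and $\gamma_l(c_k)$ genuinely behave as small functions so that all the error terms are truly $S(r, f)$; this rests on the classical fact that a meromorphic function algebraic over the small-function field of $f$ is itself small with respect to $f$. A secondary point is handling the finite exceptional set of values of $c$ at which either the degree of $P - cQ$ drops below $d$ (this happens only when $p = q$, at $c = a_p/b_q$) or some $\gamma_l(c)$ coincides with a $\beta_j$; since this set is finite, it can be avoided in the generic choice of the $c_k$.
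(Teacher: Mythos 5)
The paper does not actually prove this lemma: it is the classical Valiron--Mohon'ko theorem, quoted from Mohon'ko and Yang--Yi, whose standard proof is an elementary induction using the division algorithm and direct estimates of $m$ and $N$, with no appeal to the second fundamental theorem. Measured against that, your argument has a genuine gap at its foundation. The roots $\alpha_i$, $\beta_j$, $\gamma_l(c_k)$ of polynomials whose coefficients are merely small \emph{meromorphic} functions need not be meromorphic in $\mathbb{C}$ at all: they are in general branches of multivalued algebroid functions (already $w^2-z=0$ gives $w=\pm z^{1/2}$). Hence the ``classical fact'' you invoke (a meromorphic function algebraic over the small-function field of $f$ is small) simply does not apply to them, and every step built on the factorization is unjustified as stated: the partial-fraction bookkeeping behind the upper bound, the identity $N\left(r,\frac{1}{R(f)-c_k}\right)=\sum_l N\left(r,\frac{1}{f-\gamma_l(c_k)}\right)$ (which in any case holds only up to $S(r,f)$, because of isolated collisions $\gamma_l=\beta_j$ and of zeros/poles of the leading coefficient), and above all the application of the second fundamental theorem to the targets $\gamma_l(c_k)$. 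This is harmless only when all the $a_k,b_j$ are constants, which the lemma does not assume.

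Even granting small meromorphic roots, the version of the second fundamental theorem you use --- truncated counting functions $\overline N$ with small-function targets --- is Yamanoi's theorem, far deeper than the lemma being proved; the classically available small-function versions (Osgood, Steinmetz) are untruncated and carry an $\epsilon\,T(r,f)$ loss. Consequently your final step ``let $s$ grow'' only produces, for each $\delta>0$, a bound $T(r,R(f))\geq (d-\delta)T(r,f)$ with error terms and exceptional sets depending on $\delta$; to reach the asserted equality with a single $S(r,f)$ error you would still need a diagonalization over $\delta$ with control of the tails of the exceptional sets, which the sketch does not supply. The upper bound is also left at the level of ``careful bookkeeping'': note that $m\left(r,\frac{G(f)}{Q(f)}\right)$ need not be small (e.g. $1/f$ with $f=e^z$), so the split between $m$ and $N$ has to be argued, most naturally via the first main theorem applied to $f-\beta_j$ --- again requiring the roots to be small meromorphic functions. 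The efficient repair is to drop the factorization and the second main theorem altogether and prove the statement the classical way (polynomial case by induction on the degree, then the rational case via the Euclidean algorithm), or simply to cite it, as the paper does.
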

\begin{lem}\label{lem3.3}\cite{Laine & 1993}
The coefficients $a_0(\not\equiv0), a_1,\ldots,a_{n-1}$ of the differential equation $f^{(n)}+a_{n-1}f^{(n-1)}+\cdots+a_1f^{(1)}+a_0f=0$ are polynomials if and only if all solutions of it are entire functions of finite order .
\end{lem}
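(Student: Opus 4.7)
The plan is to split the equivalence into its two implications, handled respectively by Cauchy's existence theorem with Gronwall-type estimates, and by Wronskian--Cramer calculus combined with the lemma on the logarithmic derivative.

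For the direction ``polynomial coefficients $\Rightarrow$ finite-order solutions,'' I would first observe that when $a_0,\ldots,a_{n-1}$ are polynomials the equation has no finite singular points, so the Cauchy--Picard existence theorem makes every solution an entire function. To bound the order, I would convert the scalar equation to a first-order system for $(f,f',\ldots,f^{(n-1)})$ and apply Gronwall's inequality along radii, using $|a_j(z)|\le C(1+|z|)^{d_j}$ with $d_j=\deg a_j$; this yields a bound $|f(z)|\le C_1\exp(C_2|z|^{\rho})$ with $\rho$ determined by the exponents $d_j/(n-j)+1$, forcing finite order.

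For the converse, I would assume every solution is entire of finite order and choose a fundamental system $f_1,\ldots,f_n$. First I must show each $a_j$ is entire: any finite pole of some $a_j$ is a genuine singular point of the ODE, and local Frobenius analysis forces at least one solution to exhibit a branch or essential singularity there, which contradicts entirety. Hence the $a_j$ are entire, and the Wronskian $W=W(f_1,\ldots,f_n)$ is a nonvanishing entire function. Cramer's rule applied to the linear system
\beas \sum_{j=0}^{n-1} a_j(z)\, f_i^{(j)}(z) = -f_i^{(n)}(z), \qquad i=1,\ldots,n, \eeas
then expresses each $a_j$ as $(-1)^{n-j}W_j/W$, a ratio of determinants in the derivatives $f_i^{(\ell)}$. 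Since every $f_i$ has finite order, so do all of $W$ and $W_j$, and therefore each $a_j$ is entire of finite order.

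The main obstacle is promoting ``entire of finite order'' to ``polynomial.'' Arguing by contradiction, suppose some $a_j$ were transcendental. Dividing the equation by any solution $f=f_i$ gives
\be\label{planLD} -a_0 = \frac{f^{(n)}}{f} + \sum_{j=1}^{n-1} a_j\,\frac{f^{(j)}}{f}, \ee
and since $f$ has finite order the logarithmic derivative lemma yields $m(r,f^{(j)}/f)=O(\log r)$. Writing the $n$ equations obtained from $f_1,\ldots,f_n$ after division by $f_i$ as an $n\times n$ linear system for $(a_0,\ldots,a_{n-1})$ and solving by Cramer's rule, the determinant in the denominator is a polynomial in the logarithmic derivatives $f_i^{(\ell)}/f_i$ that is nowhere identically zero, so taking proximity functions produces $m(r,a_j)=O(\log r)$ for every $j$. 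Since each $a_j$ is entire, this forces $T(r,a_j)=O(\log r)$, contradicting transcendence and completing the proof that each $a_j$ must be a polynomial.
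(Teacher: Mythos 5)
The paper itself offers no proof of this lemma --- it is quoted verbatim from Laine's monograph (Wittich's theorem), so your attempt has to be judged against the standard proof. Your forward direction (polynomial coefficients imply all solutions are entire of finite order) is essentially that standard argument and is fine, except that the Gronwall estimate on the companion system only yields order at most $1+\max_j \deg a_j$, not the sharp exponents $d_j/(n-j)+1$ you advertise; for ``finite order'' the cruder bound suffices. The converse is where the genuine problems lie. First, your claim that a pole of some $a_j$ forces, via local Frobenius analysis, a solution with a branch point or essential singularity is simply false: the Euler equation $f''-\frac{4}{z}f'+\frac{6}{z^2}f=0$ has full solution space $c_1z^2+c_2z^3$, all entire of finite order, with non-polynomial coefficients and $a_0\not\equiv 0$ (likewise $f'-f/z=0$ with solutions $cz$). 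These examples show that the lemma as literally transcribed is false without the standing hypothesis --- present in Laine --- that the coefficients are entire; that hypothesis cannot be derived the way you propose, it has to be assumed.

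Second, and independently, your concluding step ``Cramer plus proximity functions gives $m(r,a_j)=O(\log r)$'' has a real gap. Cramer gives $a_j=\Delta_j/\Delta$ with $m(r,\Delta_j)=O(\log r)$ and $m(r,\Delta)=O(\log r)$, but that does not control $m(r,\Delta_j/\Delta)$: you would need $m(r,1/\Delta)$ to be small, and it need not be, since $\Delta=W(f_1,\dots,f_n)/(f_1\cdots f_n)$ has poles at the zeros of the $f_i$, so $T(r,\Delta)$ --- and hence possibly $m(r,1/\Delta)$, because $N(r,1/\Delta)=0$ when $W$ is zero-free --- grows like a power of $r$ as soon as some $f_i$ has infinitely many zeros. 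The classical proof avoids reciprocals of large functions altogether: one uses Abel's identity $W'=-a_{n-1}W$ to handle $a_{n-1}$ directly, and then either induction on $n$ via reduction of order or the Frobenius factorization of the operator into first-order factors whose coefficients are logarithmic derivatives of ratios of the nested Wronskians $W(f_1,\dots,f_k)$; each $a_j$ is then a differential polynomial (sums and products only) in such logarithmic derivatives, so subadditivity of $m$ and the logarithmic derivative lemma for finite-order functions give $m(r,a_j)=O(\log r)$, and entirety of $a_j$ upgrades this to $T(r,a_j)=O(\log r)$, i.e.\ $a_j$ is a polynomial. As written, your argument does not reach that conclusion; note, however, that the paper only ever uses the direction you did prove, for constant coefficients.
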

\begin{lem}\label{lem3.4}
Let $f$ be a non-constant entire function and $a_1, a_2$ be two non-zero distinct finite numbers. If $f$ and $L(f)$ $(k\geq1)$ share the set $\{a_1,a_2\}$ CMW and $T(r,h)\neq S(r,f)$, where \bea\label{e3.3} h=\frac{(L(f)-a_1)(L(f)-a_2)}{(f-a_1)(f-a_2)},\eea then the following fold:
\begin{enumerate}
\item [(i)] $\Psi\not\equiv 0$ and $T(r,\Psi)=S(r,f)$, where \bea\label{e3.4} \Psi=\frac{(f^{(1)}h-L^{(1)}(f))(f^{(1)}h+L^{(1)}(f))}{(L(f)-a_1)(L(f)-a_2)}.\eea
\item [(ii)] $T(r,L(f))=N\left(r,\displaystyle\frac{1}{L(f)-a_j}\right)+S(r,f)$ for $j=1,2$.
\item [(iii)] $m\left(r,\displaystyle\frac{1}{f-c}\right)=S(r,f)$, where $c\neq a_1,a_2\in\mathbb{C}$.
\item [(iv)] \beas T(r,h)&=&m\left(r,\displaystyle\frac{1}{f-a_1}\right)+m\left(r,\displaystyle\frac{1}{f-a_2}\right)+S(r,f)\\&=& m\left(r,\displaystyle\frac{1}{f^{(1)}}\right)+S(r,f)\leq m\left(r,\displaystyle\frac{1}{L(f)}\right)+S(r,f).\eeas
\item [(v)] $2T(r,f)-2T(r,L(f))=m\left(r,\displaystyle\frac{1}{h}\right)+S(r,f)$.
\end{enumerate}\end{lem}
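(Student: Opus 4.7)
The plan is to establish the five assertions in order, with part (i) serving as the engine for the remaining parts. I will repeatedly use CMW sharing together with Lemma \ref{lem3.1} (so $S(r, L(f)) = S(r, f)$), and the factorization
\[
\Psi = \frac{(f^{(1)}h - L^{(1)}(f))(f^{(1)}h + L^{(1)}(f))}{(L(f)-a_1)(L(f)-a_2)}.
\]
For the non-vanishing in part (i), I argue by contradiction. If $\Psi \equiv 0$, then $f^{(1)}h = \epsilon L^{(1)}(f)$ with $\epsilon \in \{+1, -1\}$; substituting the defining expression for $h$ separates variables, and integration yields
\[
\frac{f-a_1}{f-a_2} = K\left(\frac{L(f)-a_1}{L(f)-a_2}\right)^{\epsilon}
\]
for some non-zero constant $K$. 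The sub-case $K = 1$ forces $f \equiv L(f)$ (when $\epsilon = 1$) or $f + L(f) \equiv a_1 + a_2$ (when $\epsilon = -1$); both give $h \equiv 1$ and hence $T(r, h) = S(r, f)$, contradicting the hypothesis. The sub-case $K \neq 1$ realizes $L(f)$ as a non-trivial M\"obius function of $f$, so $f$ omits a finite value $v_0$ and one may write $f = v_0 + e^g$ with $g$ non-constant entire. Substituting this both into the M\"obius relation and into $L(f) = \sum_{j=1}^{k} b_j f^{(j)}$ produces an identity of the shape $A(g', \dots, g^{(k)})\, e^{2g} - \beta e^g - \alpha = 0$ with $\alpha \neq 0$; since $A$ is a polynomial in the derivatives of $g$ and hence has characteristic $o(T(r, e^g))$, growth comparison forces a contradiction.

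For $T(r, \Psi) = S(r, f)$, I first handle the counting function. Expanding
\[
\Psi = h\cdot\frac{f^{(1)}}{f-a_1}\cdot\frac{f^{(1)}}{f-a_2} - \frac{L^{(1)}(f)}{L(f)-a_1}\cdot\frac{L^{(1)}(f)}{L(f)-a_2},
\]
a residue calculation at every common zero of $f - a_i$ and $L(f) - a_j$ with matching multiplicity shows that the principal parts of the two summands cancel, so $\Psi$ is analytic there; the remaining poles lie in the exceptional $Y$-set and contribute $S(r, f)$, giving $N(r, \Psi) = S(r, f)$. The proximity $m(r, \Psi) = S(r, f)$ is then obtained by combining the logarithmic-derivative bounds $m(r, f^{(1)}/(f-a_j)) = S(r, f)$ and $m(r, L^{(1)}(f)/(L(f)-a_j)) = S(r, f)$ with the near-equality $f^{(1)}h \approx L^{(1)}(f)$ forced by $N(r, \Psi) = S(r, f)$ together with $N(r, h) = S(r, f)$. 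Parts (ii) and (iii) now follow from the Second Fundamental Theorem applied to the entire functions $L(f)$ and $f$, combined with CMW (which converts $\ol N(r, 1/(L(f)-a_j))$ into $N(r, 1/(L(f)-a_j)) + S(r, f)$) and with part (i) to provide the extra cancellation that upgrades the SFT inequality to the per-$a_j$ equality in (ii) and sharpens the defect at $c \neq a_1, a_2$ in (iii).

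Part (iv) is obtained by writing $h = \frac{L(f)-a_1}{f-a_1}\cdot\frac{L(f)-a_2}{f-a_2}$ and expanding each factor as $\frac{L(f)-a_j}{f-a_j} = -\frac{a_j}{f-a_j} + \sum_i b_i\frac{f^{(i)}}{f-a_j}$; the logarithmic-derivative pieces are $S(r, f)$ and $N(r, h) = S(r, f)$ by CMW, so $m(r, h) \leq m(r, 1/(f-a_1)) + m(r, 1/(f-a_2)) + S(r, f)$. The matching lower bound comes from combining part (ii) with the CMW-induced equality $\sum_j N(r, 1/(f-a_j)) = 2T(r, L(f)) + S(r, f)$ and the First Fundamental Theorem; this same combination also yields the remaining inequalities in (iv) via logarithmic-derivative estimates on $f^{(1)}/(f-a_j)$ and on $L(f)/f^{(1)}$. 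Part (v) is then immediate from part (iv), the identity $T(r, h) = T(r, 1/h) + O(1)$, $N(r, 1/h) = S(r, f)$, and the Nevanlinna relation $m(r, 1/(f - a_j)) = T(r, f) - N(r, 1/(f - a_j)) + O(1)$.

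The main obstacle is the non-vanishing part of (i) in the sub-case $K \neq 1$: reconciling the resulting M\"obius identification between $L(f)$ and $f$ with the differential-polynomial expression $L(f) = \sum b_j f^{(j)}$ through the parametrization $f = v_0 + e^g$ produces a transcendental identity whose closure requires careful bookkeeping of characteristic orders against $T(r, e^g)$, and this is the technically delicate step.
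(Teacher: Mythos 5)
The decisive gaps are in your treatment of $m(r,\Psi)=S(r,f)$ and of parts (ii)--(iii). The decomposition you actually write, $\Psi=h\cdot\frac{(f^{(1)})^{2}}{(f-a_1)(f-a_2)}-\frac{(L^{(1)}(f))^{2}}{(L(f)-a_1)(L(f)-a_2)}$, carries the large factor $h$ (under the hypothesis $T(r,h)\neq S(r,f)$ and $N(r,h)=S(r,f)$, the proximity $m(r,h)$ is \emph{not} small), and your appeal to a ``near-equality $f^{(1)}h\approx L^{(1)}(f)$ forced by $N(r,\Psi)=S(r,f)$'' is not a valid inference: smallness of a counting function gives no control on proximity functions. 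The cancellation has to be made explicit, as in (\ref{e3.7}): using the definition (\ref{e3.3}) of $h$ one has $\frac{f^{(1)}h}{L(f)-a_1}=\frac{f^{(1)}(L(f)-a_2)}{(f-a_1)(f-a_2)}$, so each factor $\frac{f^{(1)}h\mp L^{(1)}(f)}{L(f)-a_j}$ becomes a finite sum of logarithmic-derivative--type quotients (after partial fractions in $\frac{a_2f^{(1)}}{(f-a_1)(f-a_2)}$), and only then does $m(r,\Psi)=S(r,f)$ follow. The same identities, inverted, are what actually deliver (ii) and (iii): from (\ref{e3.4}), $\frac{1}{f^{(1)}h\mp L^{(1)}(f)}=\frac{1}{\Psi}\left(\frac{f^{(1)}}{(f-a_1)(f-a_2)}\pm\frac{L^{(1)}(f)}{(L(f)-a_1)(L(f)-a_2)}\right)$, whence $m\left(r,\frac{1}{L(f)-a_j}\right)=S(r,f)$ for each $j$ separately, and inserting an extra factor $\frac{1}{f-c}$ gives (iii). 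Your claim that (ii)--(iii) follow from the second fundamental theorem plus CMW is untenable: SFT only controls the sum of two deficiencies, so it cannot yield the per-value equality in (ii), and it certainly cannot yield $m\left(r,\frac{1}{f-c}\right)=S(r,f)$ for \emph{every} $c\neq a_1,a_2$. Similarly, the second equality in (iv), $T(r,h)=m\left(r,\frac{1}{f^{(1)}}\right)+S(r,f)$, requires $m\left(r,\frac{1}{f^{(1)}}\right)\leq m\left(r,\frac{1}{(f-a_1)(f-a_2)}\right)+S(r,f)$, which comes from the identity expressing $\Psi/f^{(1)}$, not from logarithmic-derivative estimates alone.

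On the non-vanishing of $\Psi$: your M\"obius analysis is self-inflicted and, at the crucial point, only sketched. If $\Psi\equiv0$ then $h=\pm L^{(1)}(f)/f^{(1)}=\pm\sum_{j=1}^{k}b_jf^{(j+1)}/f^{(1)}$, so $m(r,h)=S(r,f)$ by the lemma of the logarithmic derivative applied to $f^{(1)}$, while $N(r,h)=S(r,f)$ by CMW; hence $T(r,h)=S(r,f)$ at once, contradicting the hypothesis. No integration, no identification of $L(f)$ as a M\"obius function of $f$, and no growth comparison with $e^{g}$ is needed, so the sub-case you yourself flag as delicate never has to be confronted; as written, that sub-case is incomplete anyway (you must still justify $T(r,A)=S(r,e^{g})$ for the differential polynomial $A$ and dispose of $A\equiv0$). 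On the positive side, your derivation of the first equality in (iv) via the first fundamental theorem together with the CMW-induced relation $\sum_{j}N\left(r,\frac{1}{f-a_j}\right)=2T(r,L(f))+S(r,f)$, and your deduction of (v) from it, are correct and a legitimate alternative to the chain (\ref{e3.8})--(\ref{e3.10}) in the paper, but they only become available once (i) and (ii) have been secured by the explicit identities above.
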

\begin{proof}
Since $f$ and $L(f)$ share the set $\{a_1,a_2\}$ CMW, $N(r,h)+N(r,1/h)=S(r,f)$. Now if $\Psi\equiv 0$, then $h=\pm L^{(1)}(f)/f^{(1)}$. This implies that $T(r,h)=S(r,f)$, which contradicts to our assumption. Therefore $\Psi\not\equiv0.$ \vspace{1mm}
\par Let $z_0$ be a zero of $(L(f)-a_1)(L(f)-a_2)$ and $(f-a_1)(f-a_2)$ of multiplicity $p(\geq2)$. Then $z_0$ is a zero of $(f^{(1)}h-L^{(1)})(f^{(1)}h+L^{(1)})$ with multiplicity $2(p-1)\geq p$. So, $z_0$ is not a pole of $\Psi$.\vspace{1mm}
\par From (\ref{e3.3}), we get 
\bea\label{e3.5} (L(f)-a_1)(L(f)-a_2)=h(f-a_1)(f-a_2).\eea
Differentiating (\ref{e3.5}), we obtain 
\bea\label{e3.6} L^{(1)}(f)(2L(f)-a_1-a_2)=h^{(1)}(f-a_1)(f-a_2)+hf^{(1)}(2f-a_1-a_2).\eea
Let $z_0$ be a simple zero of $(L(f)-a_1)(L(f)-a_2)$ and $(f-a_1)(f-a_2)$. Then 
\beas 2L(f)(z_1)-a_1-a_2=\pm(2f(z_1)-a_1-a_2).\eeas
So from (\ref{e3.6}), we get \beas (h(z_1)f^{(1)}(z_1)-(L(f)(z_1))^2)(h(z_1)f^{(1)}(z_1)+(L(f)(z_1))^2)=0.\eeas Hence from (\ref{e3.4}), we see that $z_1$ is not a pole of $\Psi$. Since $f$ and $L(f)$ share the set $\{a_1,a_2\}$ CMW, we obtain $N(r,\Psi)=S(r,f)$.\vspace{1mm}
\par By (\ref{e3.3}), we get \bea\label{e3.7} \frac{f^{(1)}h-L^{(1)}(f)}{L(f)-a_1}=\frac{f^{(1)}L(f)}{(f-a_1)(f-a_2)}-\frac{a_2f^{(1)}}{(f-a_1)(f-a_2)}-\frac{L^{(1)}(f)}{L(f)-a_1}.\eea
Since \beas \frac{a_2f^{(1)}}{(f-a_1)(f-a_2)}=\frac{1}{a_1-a_2}\left(\frac{f^{(1)}}{f-a_1}-\frac{f^{(1)}}{f-a_2}\right),\eeas we get from (\ref{e3.7}) that \beas m\left(r,\frac{f^{(1)}h-L^{(1)}(f)}{L(f)-a_1}\right)=S(r,f).\eeas Similarly, \beas m\left(r,\frac{f^{(1)}h+L^{(1)}(f)}{L(f)-a_2}\right)=S(r,f).\eeas Therefore, from (\ref{e3.4}) we obtain $m(r,\Psi)=S(r,f)$ and hence $T(r,\Psi)=S(r,f)$, which is $(i)$.\vspace{1mm}
\par Now in view of (\ref{e3.3}), we get from (\ref{e3.4}) that \beas \frac{1}{f^{(1)}h-L^{(1)}(f)}=\frac{1}{\Psi}\left(\frac{f^{(1)}}{(f-a_1)(f-a_2)}+\frac{L^{(1)}(f)}{(L(f)-a_1)(L(f)-a_2)}\right).\eeas Therefore, \beas m\left(r,\frac{1}{f^{(1)}h-L^{(1)}(f)}\right)=S(r,f).\eeas Similarly, we get \beas m\left(r,\frac{1}{f^{(1)}h+L^{(1)}(f)}\right)=S(r,f).\eeas So we obtain \beas m\left(r,\frac{1}{L(f)-a_1}\right)&\leq& m\left(r,\frac{f^{(1)}h-L^{(1)}(f)}{L(f)-a_1}\right)+m\left(r,\frac{1}{f^{(1)}h-L^{(1)}(f)}\right)\\&=&S(r,f)\eeas and $m\left(r,\displaystyle\frac{1}{L(f)-a_2}\right)=S(r,f)$. Therefore, \beas T(r, L(f))=N\left(r, \frac{1}{L(f)-a_j}\right)+S(r,f),\eeas for $j=1,2$, which is $(ii)$.\vspace{1mm}
\par for $c\neq a_1, a_2$, we get from (\ref{e3.7}) \beas \frac{f^{(1)}h-L^{(1)}(f)}{(L(f)-a_1)(f-c)}&=&\frac{f^{(1)}L(f)}{(f-c)(f-a_1)(f-a_2)}-\frac{a_2f^{(1)}}{(f-c)(f-a_1)(f-a_2)}\\&&-\frac{L^{(1)}(f)}{(L(f)-a_1)(f-c)}.\eeas We note that \beas \frac{a_2f^{(1)}}{(f-c)(f-a_1)(f-a_2)}=\alpha\frac{f^{(1)}}{f-c}+\beta\frac{f^{(1)}}{f-a_1}+\gamma\frac{f^{(1)}}{f-a_2},\eeas where $\alpha=\displaystyle\frac{a_2}{(a_1-c)(a_2-c)}$, $\beta=\displaystyle\frac{a_2}{(c-a_1)(a_2-a_1)}$ and $\gamma=\displaystyle\frac{a_2}{(c-a_2)(a_1-a_2)}$.
Therefore, we get \beas m\left(r,\frac{f^{(1)}h-L^{(1)}(f)}{(f-c)(L(f)-a_1)}\right)=S(r,f).\eeas Since by (\ref{e3.4}), \beas \frac{1}{f-c}=\frac{1}{\Psi}\frac{f^{(1)}h-L^{(1)}(f)}{(f-c)(L(f)-a_1)}\frac{f^{(1)}h+L^{(1)}(f)}{(L(f)-a_2)},\eeas we get \beas m\left(r,\frac{1}{f-c}\right)=S(r,f),\eeas which is $(iii)$. Since \beas h=\frac{(L(f))^2-(a_1+a_2)L(f)+a_1a_2}{(f-a_1)(f-a_2)},\; \text{we have}\eeas 
\bea\label{e3.8} T(r,h)&=&m(r,h)+S(r,f)\leq m\left(r,\frac{1}{(f-a_1)(f-a_2)}\right)+S(r,f)\nonumber\\&\leq& m\left(r,\frac{1}{f^{(1)}}\right)+m\left(r,\frac{f^{(1)}}{(f-a_1)(f-a_2)}\right)+S(r,f)\nonumber\\&\leq& m\left(r,\frac{1}{f^{(1)}}\right)+S(r,f).\eea
Since \beas \frac{\Psi}{f^{(1)}}&=&\frac{f^{(1)}}{(f-a_1)(f-a_2)}\frac{(L(f))^2-(a_1+a_2)L(f)+a_1a_2}{(f-a_1)(f-a_2)}\\&&-\frac{L^{(1)}(f)}{f^{(1)}}\frac{L^{(1)}(f)}{(L(f)-a_1)(L(f)-a_2)},\eeas we get by (i) that \bea\label{e3.9} m\left(r,\frac{1}{f^{(1)}}\right)&\leq& m\left(r,\frac{\Psi}{f^{(1)}}\right)+S(r,f)\nonumber\\&\leq& m\left(r,\frac{1}{(f-a_1)(f-a_2)}\right)+S(r,f).\eea
Since $\displaystyle\frac{1}{(f-a_1)(f-a_2)}=\displaystyle\frac{h}{(L(f)-a_1)(L(f)-a_2)}$, we have by (ii) that \bea\label{e3.10} m\left(r,\frac{1}{(f-a_1)(f-a_2)}\right)\leq T(r,h)+S(r,f).\eea From (\ref{e3.8}), (\ref{e3.9}) and (\ref{e3.10}), we have \beas T(r,h)=m\left(r,\frac{1}{f-a_1}\right)+m\left(r,\frac{1}{f-a_2}\right)+S(r,f)\leq m\left(r,\frac{1}{L(f)}\right)+S(r,f),\eeas which is (iv).\vspace{1mm}
\par Keeping in view of (\ref{e3.3}), we get from (ii) and (iv) that \beas 2T(r,L(f))&=&N\left(r,\frac{1}{L(f)-a_1}\right)+N\left(r,\frac{1}{L(f)-a_2}\right)+S(r,f)\\&=& N\left(r,\frac{1}{(L(f)-a_1)(L(f)-a_2)}\right)+S(r,f)\\&=&N\left(r,\frac{1}{h(f-a_1)(f-a_2)}\right)+S(r,f)\\&=&2T(r,f)-m\left(r,\frac{1}{f-a_1}\right)-m\left(r,\frac{1}{f-a_2}\right)+N\left(r,\frac{1}{h}\right)+S(r,f)\\&=&2T(r,f)-T(r,h)+N\left(r,\frac{1}{h}\right)+S(r,f).\eeas So, $2T(r,f)-2T(r,L(f))=m\left(r,\displaystyle\frac{1}{h}\right)+S(r,f)$, which is (v). This completes the proof of the lemma.
\end{proof}
\begin{lem}\label{lem3.5}
Let $f$ be a non-constant entire function and $a_1$, $a_2$ be two distinct finite complex numbers. If $f$ and $L(f)$ share the set $\{a_1,a_2\}$ CMW, then $T(r,h)=S(r,f)$, where $h$ is defined in Lemma \ref{lem3.4}.
\end{lem}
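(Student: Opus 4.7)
The plan is to argue by contradiction. Suppose $T(r,h)\neq S(r,f)$; then all the conclusions of Lemma \ref{lem3.4} are in force.

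First, I would collect the Nevanlinna-theoretic data. The CMW sharing condition gives
\[
N\!\left(r,\tfrac{1}{f-a_1}\right) + N\!\left(r,\tfrac{1}{f-a_2}\right)
\;=\; N\!\left(r,\tfrac{1}{L(f)-a_1}\right) + N\!\left(r,\tfrac{1}{L(f)-a_2}\right) + S(r,f),
\]
since up to a counting function of order $S(r,f)$, every $a_j$-point of $f$ pairs with an $a_{j'}$-point of $L(f)$ of the same multiplicity. Combining this with Lemma \ref{lem3.4}(ii) yields $N(r,\tfrac{1}{f-a_1})+N(r,\tfrac{1}{f-a_2})=2T(r,L(f))+S(r,f)$. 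The first fundamental theorem applied to $f-a_j$ together with Lemma \ref{lem3.4}(iv) then produces the key identity
\[
T(r,h) \;=\; 2T(r,f) - 2T(r,L(f)) + S(r,f). \qquad (\ast)
\]

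Second, I would invoke the Theorem \ref{t1} hypothesis $\ol N(r, L(f)/f^{(1)}) = S(r,f)$. Writing $L(f)/f^{(1)} = b_1 + \sum_{j=2}^{k} b_j (f^{(1)})^{(j-1)}/f^{(1)}$, the logarithmic derivative lemma applied to the entire function $f^{(1)}$ gives $m(r, L(f)/f^{(1)}) = S(r,f)$. At a zero of $f^{(1)}$ each ratio $(f^{(1)})^{(j-1)}/f^{(1)}$ has a pole of order at most $j-1\leq k-1$, so the poles of $L(f)/f^{(1)}$ have order bounded by $k-1$; combined with the hypothesis this forces $N(r,L(f)/f^{(1)}) \leq (k-1)\ol N(r,L(f)/f^{(1)}) = S(r,f)$, whence $T(r, L(f)/f^{(1)}) = S(r,f)$. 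Writing $L(f)=G f^{(1)}$ with $T(r,G)=S(r,f)$, I would conclude $T(r,L(f))=T(r,f^{(1)})+S(r,f)$.

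Third, and this is where the main obstacle lies, I would derive the contradiction. Plugging $T(r,L(f))=T(r,f^{(1)})+S(r,f)$ into $(\ast)$ reduces the task to showing $T(r,f)=T(r,f^{(1)})+S(r,f)$. The inequality $T(r,f^{(1)})\leq T(r,f)+S(r,f)$ is automatic from the logarithmic derivative lemma for entire $f$, so the difficulty is to bound $T(r,f)$ above by $T(r,f^{(1)})$ up to a small quantity. To accomplish this I would leverage the smallness of $\Psi$ from Lemma \ref{lem3.4}(i), together with the identity
\[
\Psi\cdot(L(f)-a_1)(L(f)-a_2) \;=\; (f^{(1)}h)^2 - (L^{(1)}(f))^2,
\]
and the relations $L(f)=Gf^{(1)}$, $L^{(1)}(f)=G'f^{(1)}+Gf^{(2)}$ from the previous step. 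Setting $U = L(f)-(a_1+a_2)/2$ converts the underlying polynomial relation to the quadratic $U^2=\tfrac{(a_1-a_2)^2}{4}+h(f-a_1)(f-a_2)$, and a Mohonko-type comparison of characteristics in this identity, combined with Lemma \ref{lem3.4}(iii) (which rules out defects of $f$ at any value outside $\{a_1,a_2\}$) and Lemma \ref{lem3.4}(v), should force $T(r,h)=S(r,f)$, contradicting the assumption. The principal technical challenge will be ensuring that this algebraic comparison yields a strict improvement rather than merely the tautology $T(r,h)=2(T(r,f)-T(r,f^{(1)}))+S(r,f)$ already furnished by $(\ast)$.
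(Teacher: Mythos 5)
Your first step is sound, but it only re-derives Lemma \ref{lem3.4}(v): since $N(r,1/h)=S(r,f)$, your identity $(\ast)$ is equivalent to $2T(r,f)-2T(r,L(f))=m\left(r,\frac{1}{h}\right)+S(r,f)$, so no new information is gained there. The genuine gap is your third step, which is not an argument but an acknowledged hope. The identity $\Psi\,(L(f)-a_1)(L(f)-a_2)=(f^{(1)}h)^2-(L^{(1)}(f))^2$ is just the definition (\ref{e3.4}) of $\Psi$, and a characteristic-function comparison in $U^2=\tfrac{(a_1-a_2)^2}{4}+h(f-a_1)(f-a_2)$ cannot do better than $(\ast)$, because $h$ is \emph{not} small, so Mohon'ko's lemma (Lemma \ref{lem3.2}) does not apply to the right-hand side; as you yourself anticipate, the comparison only returns the tautology. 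What is missing is the mechanism that actually produces the contradiction: one must differentiate the $\Psi$-relation and eliminate $h$ to reach (\ref{e3.14}), conclude (via (\ref{e3.12})--(\ref{e3.13})) that almost all zeros of $(L(f)-a_1)(L(f)-a_2)$ are simple and that each simple zero annihilates the second factor on the right of (\ref{e3.14}), which yields the new small function $\Psi_1$ of (\ref{e3.15}); then the two cases must be handled separately: $\Psi_1\not\equiv0$ forces $T(r,f)=T(r,L(f))+S(r,f)$ and $m\left(r,\frac{1}{f-a_1}\right)+m\left(r,\frac{1}{f-a_2}\right)=S(r,f)$, hence $T(r,h)=S(r,f)$; while $\Psi_1\equiv0$ integrates to $(hf^{(1)})^2=c\Psi$ and leads to the mutually incompatible relations (\ref{e3.19}) and (\ref{e3.22}), hence $T(r,f)=S(r,f)$. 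Nothing in your sketch supplies this construction or any substitute for it.

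A second problem: your second step invokes the hypothesis (\ref{e2.1}) of Theorem \ref{t1}, namely $\ol N\left(r,\frac{L(f)}{f^{(1)}}\right)=S(r,f)$. This is not among the hypotheses of Lemma \ref{lem3.5}, and the paper's proof of the lemma nowhere uses it; importing it means you would at best prove a weaker statement than the one asserted. Moreover, even granting it, the conclusion $T(r,L(f))=T(r,f^{(1)})+S(r,f)$ does not advance matters: by $(\ast)$ the quantity you must show is small is exactly $T(r,f)-T(r,L(f))$, and replacing $T(r,L(f))$ by $T(r,f^{(1)})$ leaves you needing $T(r,f)=T(r,f^{(1)})+S(r,f)$, which under that substitution is precisely equivalent to $T(r,h)=S(r,f)$ — so the reduction is circular rather than a proof.
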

\begin{proof}
Since $f$ and $L(f)$ share the set $\{a_1,a_2\}$ CMW, we must have $N(r,h)=S(r,f)$ and $N\left(r,1/h\right)=S(r,f)$. Assume on the contrary that $T(r,h)\neq S(r,f)$. By Lemma \ref{lem3.4}, we know that $\Psi\not\equiv 0$ and $T(r,\Psi)=S(r,f)$.\vspace{1mm}
	
\par Differentiating (\ref{e3.3}), we get 
\bea\label{e3.11} 2L(f)L^{(1)}(f)-(a_1+a_2)L^{(1)}(f)&=&(2ff^{(1)}-(a_1+a_2)f^{(1)})h\nonumber\\&&+h^{(1)}(f-a_1)(f-a_2).\eea
From (\ref{e3.3}) and (\ref{e3.11}), we obtain \beas \frac{(2L(f)-(a_1+a_2))L^{(1)}(f)}{(L(f)-a_1)(L(f)-a_2)}=\frac{(2f-(a_1+a_2))f^{(1)}}{(f-a_1)(f-a_2)}+\frac{h^{(1)}}{h}.\eeas Squaring the above equation, we get \beas \frac{((2L(f)-(a_1+a_2))^2(L^{(1)}(f))^2}{(L(f)-a_1)^2(L(f)-a_2)^2}&=&\frac{((2f-(a_1+a_2)))^2(f^{(1)})^2}{(f-a_1)^2(f-a_2)^2}+\beta^2\\&&+\frac{2\beta(2f-(a_1+a_2))f^{(1)}}{(f-a_1)(f-a_2)},\eeas where $\beta=h^{(1)}/h$.\vspace{1mm}
\par Eliminating $L^{(1)}(f)$ from (\ref{e3.3}), (\ref{e3.4}) and the above equation, we get
\bea\label{e3.12} \frac{(2L(f)-(a_1+a_2))^2\Psi}{(L(f)-a_1)(L(f)-a_2)}&=&\frac{4(L(f)+f-(a_1+a_2))(L(f)-f)(f^{(1)})^2}{(f-a_1)^2(f-a_2)^2}-\beta^2\nonumber\\&&-\frac{2\beta(2f-(a_1+a_2))f^{(1)}}{(f-a_1)(f-a_2)}.\eea 
Let $z_0$ be a zero of $(f-a_1)(f-a_2)$ which is also a zero of $(L(f)-a_1)(L(f)-a_2)$. Since $f$ and $L(f)$ share the set $\{a_1,a_2\}$ CMW and $T(r,\beta)=S(,f)$, almost all the poles of right hand side of (\ref{e3.12}) are simple, and hence it follows from the same equation that ``almost all" the zeros of $(L(f)-a_1)(L(f)-a_2)$ are simple as long as they are not the zeros of $\Psi$. Thus \bea\label{e3.13} N\left(r,\displaystyle\frac{1}{L(f)-a_j}\right)=\ol N\left(r,\displaystyle\frac{1}{L(f)-a_j}\right)+S(r,f),\; \text{j=1, 2}.\eea 
Differentiating (\ref{e3.4}), we get 
\beas 2h^2f^{(1)}(f^{(2)}+\beta f^{(1)})-2L^{(1)}(f)L^{(2)}(f)&=&\Psi^{(1)}(L(f)-a_1)(L(f)-a_2)\\&&+\Psi(2L(f)-(a_1+a_2))L^{(1)}(f).\eeas
Now eliminating $h$ from the above equation by using (\ref{e3.4}), we get 
\bea\label{e3.14} &&\left[2\Psi(f^{(2)}+\beta f^{(1)})-f^{(1)}\Psi^{(1)}\right](L(f)-a_1)(L(f)-a_2)=\nonumber\\&&L^{(1)}\left[2f^{(1)}\Psi L-(a_1+a_2)f^{(1)}\Psi-2(\beta f^{(1)}+f^{(2)})L^{(1)}+2f^{(1)}L^{(2)}\right].\eea
From the above equation, we see that any simple zeros of $(L(f)-a_1)(L(f)-a_2)$ must be the zeros of $2f^{(1)}\Psi L(f)-(a_1+a_2)f^{(1)}\Psi-2(\beta f^{(1)}+f^{(2)})L^{(1)}(f)+2f^{(1)}L^{(2)}(f)$.\vspace{1mm}
\par Let \bea\label{e3.15} \Psi_1=\frac{2f^{(1)}\Psi L(f)-(a_1+a_2)f^{(1)}\Psi-2(\beta f^{(1)}+f^{(2)})L^{(1)}(f)+2f^{(1)}L^{(2)}(f)}{(f-a_1)(f-a_2)}.\eea 
Since $f$ and $L(f)$ share the set $\{a_1,a_2\}$ CMW and ``almost all" the zeros of $(L(f)-a_1)(L(f)-a_2)$ are simple, we must have $N(r,\Psi_1)=S(r,f)$.\vspace{1mm}
\par On the hand, by the lemma of logarithmic derivative, it can be easily seen that $m(r,\Psi_1)=S(r,f)$. Hence, $T(r,\Psi_1)=S(r,f)$.\vspace{1mm}
\par We now consider the following two cases:\vspace{1mm}
\par \textbf{Case 1:} $\Psi_1\not\equiv 0$. Then it follows from (\ref{e3.15}) that \beas 2T(r,f)&=&T(r,(f-a_1)(f-a_2))+S(r,f)\\&=&m(r,(f-a_1)(f-a_2))+S(r,f)\\&\leq& m(r,(f-a_1)(f-a_2)\Psi_1)+m\left(r,\frac{1}{\Psi_1}\right)+S(r,f)\\&\leq& m(r,f^{(1)})+m(r,L(f))+T(r,\Psi_1)+S(r,f)\\&\leq& T(r,f)+T(r,L(f))+S(r,f).\eeas Therefore, $T(r,f)\leq T(r,L(f))+S(r,f)$.\vspace{1mm}
\par Since $L(f)$ is a linear differential polynomial in $f$, we get \beas T(r,L(f))\leq T(r,f)+S(r,f).\eeas Combining the above two we have $T(r,f)=T(r,L(f))+S(r,f)$.\vspace{1mm}
\par By Lemma \ref{lem3.4} (ii) and (\ref{e3.13}), we get \beas 2T(r,f)=\ol N\left(r,\frac{1}{f-a_1}\right)+\ol N\left(r,\frac{1}{f-a_2}\right)+S(r,f),\eeas which implies that \beas m\left(r,\frac{1}{f-a_1}\right)+m\left(r,\frac{1}{f-a_2}\right)=S(r,f).\eeas Thus from (\ref{e3.3}), the lemma of logarithmic derivative and the above observation, we get \beas T(r,h)&=&N(r,h)+m(r,h)\\&\leq& m\left(r,\frac{(L(f))^2}{(f-a_1)(f-a_2)}\right)+m\left(r,\frac{a_1a_2}{(f-a_1)(f-a_2)}\right)\\&&+2m\left(r,\frac{L(f)}{(f-a_1)(f-a_2)}\right)+S(r,f)=S(r,f).\eeas i.e., $T(r,h)=S(r,f)$, which contradicts to our assumption.\vspace{1mm}
\par \textbf{Case 2:} $\Psi_1\equiv0$. Then from (\ref{e3.14}) and (\ref{e3.15}), we obtain \beas \frac{\Psi^{(1)}}{\Psi}=2\left(\frac{h^{(1)}}{h}+\frac{f^{(2)}}{f^{(1)}}\right).\eeas Integrating above, we get \bea\label{e3.16} (hf^{(1)})^2=c\Psi,\eea where $c$ is a non-zero constant.\vspace{1mm}
\par It follows from (\ref{e3.4}) and (\ref{e3.16}) that \beas (L^{(1)}(f))^2&=&-((L(f))^2-(a_1+a_2)L(f)+(a_1a_2-c))\Psi\\&=&-(L(f)-d_1)(L(f)-d_2)\Psi,\eeas where $d_1$ and $d_2$ are two complex constants. If $d_1\neq d_2$, then by the lemma of logarithmic derivative, we get \beas m\left(r,\frac{1}{L^{(1)}(f)}\right)=m\left(r,\frac{-L^{(1)}(f)}{(L(f)-d_1)(L(f)-d_2)}\right)=S(r,f).\eeas
Therefore, \beas m\left(r,\frac{1}{(f-a_1)(f-a_2)}\right)\leq m\left(r,\frac{L^{(1)}(f)}{(f-a_1)(f-a_2)}\right)+m\left(r,\frac{1}{L^{(1)}(f)}\right)=S(r,f).\eeas Hence keeping in view of the above, we get from (\ref{e3.3}) and the lemma of logarithmic derivative 
\beas T(r,h)&=&N(r,h)+m(r,h)\\&\leq& m\left(r,\frac{(L(f))^2}{(f-a_1)(f-a_2)}\right)+m\left(r,\frac{L(f)}{(f-a_1)(f-a_2)}\right)\\&&+m\left(r,\frac{1}{(f-a_1)(f-a_2)}\right)+S(r,f)\\&=& S(r,f),\eeas which contradicts to our assumption.\vspace{1mm}
\par Therefore, $d_1=d_2=(a_1+a_2)/2=d$, say.
Hence, \bea\label{e3.17} (L^{(1)}(f))^2=-\Psi(L(f)-d)^2.\eea From (\ref{e3.3}), (\ref{e3.16}) and (\ref{e3.17}), we get \bea\label{e3.18} (L(f)-d)(L(f)-a_1)(L(f)-a_2)=c_2(f-a_1)(f-a_2)\Psi_2,\eea where $c_2$ is a non-zero constant satisfying $c_2^2=-c$ and $\Psi_2=L^{(1)}(f)/f^{(1)}$.\vspace{1mm}
\par From (\ref{e3.16}), it can be easily seen that $N(r,1/f^{(1)})=S(r,f)$. Therefore, $N(r,\Psi_2)=S(r,f)$. On the other hand, by the lemma of logarithmic derivative, we have $m(r,\Psi_2)=S(r,f)$, and hence $T(r,\Psi_2)=S(r,f)$.\vspace{1mm}
\par Since $\Psi_2\not\equiv0$, it follow from (\ref{e3.18}) that \bea\label{e3.19} 3T(r,L(f))=2T(r,f)+S(r,f).\eea Let \bea\label{e3.20}\Psi_3=\frac{L^{(1)}(f)}{L(f)-d}.\eea Then from (\ref{e3.17}), we get $\Psi_3^2=-\Psi$. Hence, $T(r,\Psi_3)=S(r,f)$ and $\Psi_3\equiv0.$\vspace{1mm}
\par Now from (\ref{e3.3}) and (\ref{e3.11}), we get 
\bea\label{e3.21} (f-d)hf^{(1)}=(L(f)-d)^2\Psi_3-\frac{1}{2}\beta(L(f)-a_1)(L(f)-a_2).\eea By (\ref{e3.16}), we get \beas T(r,hf^{(1)})=S(r,f).\eeas Therefore, by (\ref{e3.21}), we obtain 
\bea\label{e3.22} T(r,f)=T(r,L(f)),\; \text{or}\; T(r,f)=2T(r,L(f))+S(r,f)\eea according as when $\Psi_3=\beta/2$ or not.\vspace{1mm}
\par Combining (\ref{e3.19}) and (\ref{e3.22}), we get $T(r,f)=S(r,f)$, which is a contradiction.\vspace{1mm}
\par Hence $T(r,h)=S(r,f)$. This completes the proof of the lemma.
\end{proof}
\begin{lem}\label{lem3.7}
Let $f$ be a non-constant entire function and $a_1, a_2$ be two non-zero distinct finite numbers. If $f^{\lambda}$ and $M(f)$ $(k\geq1)$ share the set $\{a_1,a_2\}$ CMW and $T(r,h_1)\neq S(r,f)$, where \bea\label{e3.23} h_1=\frac{(M(f)-a_1)(M(f)-a_2)}{(f^{\lambda}-a_1)(f^{\lambda}-a_2)},\eea then the following fold:
\begin{enumerate}
\item [(i)] $\Phi\not\equiv 0$ and $T(r,\Phi)=S(r,f)$, where \bea\label{e3.24} \Phi=\frac{((f^{\lambda})^{(1)}h_1-(M(f))^{(1)})((f^{\lambda})^{(1)}h_1+(M(f))^{(1)})}{(M(f)-a_1)(M(f)-a_2)}.\eea
\item [(ii)] $T(r,M(f))=N\left(r,\displaystyle\frac{1}{M(f)-a_j}\right)+S(r,f)$ for $j=1,2$.
\item [(iii)] $m\left(r,\displaystyle\frac{1}{f^{\lambda}-c}\right)=S(r,f)$, where $c\neq a_1,a_2\in\mathbb{C}$.
\item [(iv)] \beas T(r,h_1)&=&m\left(r,\displaystyle\frac{1}{f^{\lambda}-a_1}\right)+m\left(r,\displaystyle\frac{1}{f^{\lambda}-a_2}\right)+S(r,f)\\&=& m\left(r,\displaystyle\frac{1}{(f^{\lambda})^{(1)}}\right)+S(r,f)\leq m\left(r,\displaystyle\frac{1}{M(f)}\right)+S(r,f).\eeas
\item [(v)] $2\lambda T(r,f)-2T(r,M(f))=m\left(r,\displaystyle\frac{1}{h_1}\right)+S(r,f)$.
\end{enumerate}\end{lem}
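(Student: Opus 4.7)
The plan is to carry out the proof in strict parallel with Lemma \ref{lem3.4}, making the substitutions $f \rightsquigarrow f^{\lambda}$ and $L(f) \rightsquigarrow M(f)$ throughout. The only new analytic ingredient needed is that $M(f)/f^{\lambda} = \prod_{j=1}^{k} (f^{(j)}/f)^{n_j}$, so by iterating the lemma of logarithmic derivative one gets $m(r, M(f)/f^{\lambda}) = S(r,f)$, and similarly $m(r, (M(f))^{(1)}/M(f)) = S(r,f)$. Together with Lemma \ref{lem3.6} (which guarantees $S(r,M(f)) = S(r,f)$) and the hypothesis (\ref{e2.1a}), this is enough to transport every estimate from the proof of Lemma \ref{lem3.4}.

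For part (i), the CMW hypothesis gives $N(r,h_1) + N(r,1/h_1) = S(r,f)$ at once. If $\Phi \equiv 0$ then $h_1 = \pm (M(f))^{(1)}/(f^{\lambda})^{(1)}$, so $T(r,h_1) = S(r,f)$ by the lemma of logarithmic derivative, contradicting the hypothesis; hence $\Phi \not\equiv 0$. To bound $N(r,\Phi)$: at a common zero $z_0$ of $(f^{\lambda}-a_1)(f^{\lambda}-a_2)$ and $(M(f)-a_1)(M(f)-a_2)$ of multiplicity $p \geq 2$, the numerator of $\Phi$ vanishes to order $2(p-1) \geq p$, so $z_0$ is not a pole; at a simple common zero, differentiating (\ref{e3.23}) one sees that $2M(f)-a_1-a_2 = \pm(2f^{\lambda}-a_1-a_2)$ there, forcing one of the two factors $(f^{\lambda})^{(1)}h_1 \mp (M(f))^{(1)}$ to vanish and again killing the pole. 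Hence $N(r,\Phi) = S(r,f)$. For $m(r,\Phi)$, use the decomposition
\beas \frac{(f^{\lambda})^{(1)} h_1 - (M(f))^{(1)}}{M(f)-a_1} = \frac{(f^{\lambda})^{(1)} M(f)}{(f^{\lambda}-a_1)(f^{\lambda}-a_2)} - \frac{a_2 (f^{\lambda})^{(1)}}{(f^{\lambda}-a_1)(f^{\lambda}-a_2)} - \frac{(M(f))^{(1)}}{M(f)-a_1}, \eeas
split the middle term by partial fractions in $(f^{\lambda})^{(1)}/(f^{\lambda}-a_j)$, and bound the first term by writing $(f^{\lambda})^{(1)}M(f) = (M(f)/f^{\lambda}) \cdot f^{\lambda}(f^{\lambda})^{(1)}$ so that the log-derivative lemma together with $m(r,M(f)/f^{\lambda}) = S(r,f)$ applies. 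The analogous bound holds for the $(M(f)-a_2)$ version, giving $m(r,\Phi) = S(r,f)$.

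For parts (ii)--(iii), the recipe is purely algebraic: from (\ref{e3.24}) solve for $1/((f^{\lambda})^{(1)}h_1 \mp (M(f))^{(1)})$, getting proximity function $S(r,f)$; combine with the expansion in the previous paragraph to deduce $m(r,1/(M(f)-a_j)) = S(r,f)$, hence (ii). For (iii), insert an extra $(f^{\lambda}-c)$ factor and use the partial-fraction identity
\beas \frac{a_2(f^{\lambda})^{(1)}}{(f^{\lambda}-c)(f^{\lambda}-a_1)(f^{\lambda}-a_2)} = \alpha \frac{(f^{\lambda})^{(1)}}{f^{\lambda}-c} + \beta \frac{(f^{\lambda})^{(1)}}{f^{\lambda}-a_1} + \gamma \frac{(f^{\lambda})^{(1)}}{f^{\lambda}-a_2}, \eeas
exactly as in the proof of Lemma \ref{lem3.4}(iii).

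For (iv), expand $h_1 = (M(f)^2 - (a_1+a_2)M(f) + a_1 a_2)/((f^{\lambda}-a_1)(f^{\lambda}-a_2))$; the three resulting proximity terms of the form $m(r, M(f)^j/((f^{\lambda}-a_1)(f^{\lambda}-a_2)))$ are each bounded by $m(r, 1/((f^{\lambda}-a_1)(f^{\lambda}-a_2))) + S(r,f)$ after inserting and extracting $(f^{\lambda})^{(1)}$ and using $m(r, M(f)/f^{\lambda}) = S(r,f)$. The reverse direction is obtained via $\Phi/(f^{\lambda})^{(1)}$ as in the original proof, and the final inequality $T(r,h_1) \leq m(r,1/M(f)) + S(r,f)$ follows from $h_1 = 1/M(f) \cdot [\text{terms with small proximity}]$. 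Finally, (v) is a direct combination of (ii) and (iv), using $T(r,f^{\lambda}) = \lambda T(r,f) + O(1)$ to produce the factor $2\lambda T(r,f)$ on the left.

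The only step that is not purely mechanical is verifying that the log-derivative estimates survive when $L(f)$ is replaced by the nonlinear object $M(f)$. This is the main point to be careful about, and it is handled uniformly by the two identities $m(r,M(f)/f^{\lambda}) = S(r,f)$ and $m(r,(M(f))^{(1)}/M(f)) = S(r,f)$ noted at the outset, together with the hypothesis (\ref{e2.1a}) when one needs to control counting functions coming from $(f^{\lambda})^{(1)}/M(f)$.
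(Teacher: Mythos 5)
Your proposal is correct and is essentially the paper's own approach: the paper gives no proof of Lemma \ref{lem3.7} beyond stating that it runs parallel to Lemma \ref{lem3.4}, and your transcription with $f\to f^{\lambda}$, $L(f)\to M(f)$, supported by $m\left(r,M(f)/f^{\lambda}\right)=S(r,f)$ and $S(r,M(f))=S(r,f)$ from Lemma \ref{lem3.6}, supplies exactly the omitted details. One minor point: condition (\ref{e2.1a}) is not a hypothesis of the lemma and is not actually needed there, since $m\left(r,M(f)/(f^{\lambda})^{(1)}\right)=S(r,f)$ already follows by pairing one derivative factor of $M(f)$ against $f^{(1)}$ and the remaining $\lambda-1$ factors against $f^{\lambda-1}$, while the relevant counting functions are controlled by the CMW sharing and the entirety of $f$.
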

\begin{proof}
The proof of this lemma can be carried out in a similar manner as done in the proof of Lemma \ref{lem3.4}. So, we omit the details.
\end{proof}
\begin{lem}\label{lem3.8}
Let $f$ be a non-constant entire function and $a_1$, $a_2$ be two distinct finite complex numbers. If $f^{\lambda}$ and $M(f)$ share the set $\{a_1,a_2\}$ CMW, then $T(r,h_1)=S(r,f)$, where $h_1$ is defined in Lemma \ref{lem3.7}.
\end{lem}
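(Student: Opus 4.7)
The plan is to adapt, nearly line-by-line, the proof of Lemma \ref{lem3.5}, with $f^{\lambda}$ in the role of $f$ and $M(f)$ in the role of $L(f)$. Since $T(r,f^{\lambda})=\lambda T(r,f)+O(1)$ by Lemma \ref{lem3.2}, the small-function classes $S(r,f^{\lambda})$ and $S(r,f)$ coincide, so the substitution is legitimate; Lemma \ref{lem3.6} also guarantees $S(r,M(f))=S(r,f)$. Assume for contradiction that $T(r,h_1)\neq S(r,f)$. Lemma \ref{lem3.7} then supplies $\Phi\not\equiv 0$ with $T(r,\Phi)=S(r,f)$ together with the conclusions (ii)--(v) of that lemma.

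First I would differentiate (\ref{e3.23}), eliminate $(M(f))^{(1)}$ using (\ref{e3.24}), and square the resulting identity to reach an analog of (\ref{e3.12}):
\begin{eqnarray*}
\frac{(2M(f)-(a_1+a_2))^2\Phi}{(M(f)-a_1)(M(f)-a_2)} &=& \frac{4(M(f)+f^{\lambda}-(a_1+a_2))(M(f)-f^{\lambda})((f^{\lambda})^{(1)})^2}{(f^{\lambda}-a_1)^2(f^{\lambda}-a_2)^2}\\
&&-\beta^2-\frac{2\beta(2f^{\lambda}-(a_1+a_2))(f^{\lambda})^{(1)}}{(f^{\lambda}-a_1)(f^{\lambda}-a_2)},
\end{eqnarray*}
with $\beta=h_1^{(1)}/h_1$, $T(r,\beta)=S(r,f)$. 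A pole-order comparison at common zeros of $(f^{\lambda}-a_1)(f^{\lambda}-a_2)$ and $(M(f)-a_1)(M(f)-a_2)$ yields the analog of (\ref{e3.13}): almost all zeros of $(M(f)-a_j)$ are simple, i.e.\ $N(r,1/(M(f)-a_j))=\ol N(r,1/(M(f)-a_j))+S(r,f)$ for $j=1,2$. Differentiating (\ref{e3.24}) and eliminating $h_1$ then exhibits a function $\Phi_1$ (the analog of $\Psi_1$) whose denominator is $(f^{\lambda}-a_1)(f^{\lambda}-a_2)$ and whose numerator vanishes at every simple zero of $(M(f)-a_1)(M(f)-a_2)$, so $N(r,\Phi_1)=S(r,f)$, while the logarithmic derivative lemma gives $m(r,\Phi_1)=S(r,f)$; hence $T(r,\Phi_1)=S(r,f)$.

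Next I would split on $\Phi_1$. If $\Phi_1\not\equiv 0$, the chain of estimates mirroring Case 1 of Lemma \ref{lem3.5} produces
\[
2\lambda T(r,f)\leq m(r,(f^{\lambda})^{(1)})+m(r,M(f))+S(r,f)\leq \lambda T(r,f)+T(r,M(f))+S(r,f),
\]
so $T(r,M(f))=\lambda T(r,f)+S(r,f)$; combined with Lemma \ref{lem3.7}(ii) and the simplicity statement above, this forces $m(r,1/(f^{\lambda}-a_j))=S(r,f)$, and then (\ref{e3.23}) together with the logarithmic derivative lemma gives $T(r,h_1)=S(r,f)$, contradicting the assumption. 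If instead $\Phi_1\equiv 0$, integrating the resulting logarithmic-derivative identity produces $(h_1(f^{\lambda})^{(1)})^2=c\,\Phi$ with $c$ a non-zero constant; substituting into (\ref{e3.24}) gives $((M(f))^{(1)})^2=-(M(f)-d_1)(M(f)-d_2)\Phi$. If $d_1\neq d_2$, the logarithmic derivative lemma yields $m(r,1/(M(f))^{(1)})=S(r,f)$, whence $m(r,1/((f^{\lambda}-a_1)(f^{\lambda}-a_2)))=S(r,f)$ and again $T(r,h_1)=S(r,f)$, a contradiction. If $d_1=d_2=(a_1+a_2)/2$, setting $\Phi_3=(M(f))^{(1)}/(M(f)-d)$ gives $\Phi_3^2=-\Phi$ so $T(r,\Phi_3)=S(r,f)$; the cubic identity analogous to (\ref{e3.18}) delivers $3T(r,M(f))=2\lambda T(r,f)+S(r,f)$, and the analog of (\ref{e3.21}) forces either $\lambda T(r,f)=T(r,M(f))+S(r,f)$ or $\lambda T(r,f)=2T(r,M(f))+S(r,f)$; combining with the cubic relation drives $T(r,f)=S(r,f)$, the final contradiction.

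The main obstacle is bookkeeping: $M(f)$ is a nonlinear monomial, so each appearance of the inequality $T(r,L(f))\leq T(r,f)+S(r,f)$ used implicitly in Lemma \ref{lem3.5} has to be replaced by $T(r,M(f))\leq \lambda T(r,f)+S(r,f)$, and each ``counting" relation must be rescaled by the factor $\lambda$ (for instance the analog of (\ref{e3.19}) becomes $3T(r,M(f))=2\lambda T(r,f)+S(r,f)$). One should also check, when integrating $\Phi^{(1)}/\Phi=2(h_1^{(1)}/h_1+(f^{\lambda})^{(2)}/(f^{\lambda})^{(1)})$, that the constant of integration remains nonzero in the present setting; this reduces to the fact that $\Phi\not\equiv 0$, already supplied by Lemma \ref{lem3.7}(i). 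With these modifications in place, the argument of Lemma \ref{lem3.5} transfers verbatim.
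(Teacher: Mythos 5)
Your proposal is correct and follows exactly the route the paper intends: the paper's own "proof" of Lemma \ref{lem3.8} is just the remark that the argument of Lemma \ref{lem3.5} carries over with $f^{\lambda}$ and $M(f)$ in place of $f$ and $L(f)$, and your write-up carries out precisely that adaptation (indeed in more detail than the paper, correctly inserting the factor $\lambda$ via $T(r,M(f))\leq\lambda T(r,f)+S(r,f)$ and adjusting the relations such as $3T(r,M(f))=2\lambda T(r,f)+S(r,f)$).
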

\begin{proof}
The proof of this lemma is essentially can be done in a similar manner as Lemma \ref{lem3.5}. So, we omit the details.	
\end{proof}

\section{Proof of the main results}
\begin{proof}[\bf Proof of Theorem \ref{t1}]
Let $2\eta$ be the principal branch of $\log h$, where $h$ is defined as in Lemma \ref{lem3.4}. Then by Lemma \ref{lem3.5}, we obtain \beas T(r,e^{\eta})=\frac{1}{2}T(r,h)+S(r,f)=S(r,f).\eeas Also (\ref{e3.3}) can be written as \bea\label{e4.1} (L(f)-a_1)(L(f)-a_2)=e^{2\eta}(f-a_1)(f-a_2).\eea And so \bea\label{e4.2} GH=\left(\frac{a_1-a_2}{2}\right)^2(e^{2\eta}-1),\eea where \beas G=e^{\eta}f-\frac{a_1+a_2}{2}e^{\eta}+L(f)-\frac{a_1+a_2}{2}\eeas and \beas H=e^{\eta}f-\frac{a_1+a_2}{2}e^{\eta}-L(f)+\frac{a_1+a_2}{2}.\eeas
If $e^{2\eta}\equiv1$, then from (\ref{e4.1}), we get \beas (f-L(f))(f+L(f)-a_1-a_2)=0,\eeas which implies that either $f=L(f)$, or $f+L(f)=a_1+a_2$.\vspace{1mm}
\par Now suppose that $e^{2\eta}\not\equiv1$. Since $f$ is entire we get $N(r,G)+N(r,H)=S(r, f)$, and so, from (\ref{e4.2}), we get $N(r,1/H)+N(r,1/G)=S(r,f)$.
Therefore, \bea\label{e4.3} T\left(r,\frac{G^{(j)}}{G}\right)+T\left(r,\frac{H^{(j)}}{H}\right)=S(r,f),\eea where $j=1,2,\ldots,k$.\vspace{1mm}
\par Suppose $f^{(1)}=b L(f)$. Then using the condition (\ref{e1.1}), the lemma of logarithmic derivative, and the first fundamental theorem of Nevalinna, it is easily seen that $T(r,b)=S(r,f)$.
\par From the definition of $G$ and $H$ it follows that \bea\label{e4.4} G+H=e^{\eta}(2f-a_1-a_2)\eea and \bea\label{e4.5} G-H=2L(f)-a_1-a_2=2\lambda f^{(1)}-a_1-a_2,\eea where $b\lambda=1$ and $T(r,\lambda)=S(r,f)$ as $T(r,b)=S(r,f).$\vspace{1mm}
\par Eliminating $f$ and $f^{(1)}$, from $(\ref{e4.4})$ and $(\ref{e4.5})$, we get \bea\label{e4.6} \left(e^{\eta}+\lambda\eta^{(1)}-\lambda\frac{G^{(1)}}{G}\right)G+\left(\lambda\eta^{(1)}-e^{\eta}-\lambda\frac{H^{(1)}}{H}\right)H+b(a_1+a_2)=0.\eea
Now eliminating $H$ from $(\ref{e4.2})$ and $(\ref{e4.6})$, we obtain
\bea\label{e4.7} \Phi_1 G^{2}+\Phi_2 G+\Phi_3=0,\eea where \bea\label{e4.8} \Phi_1=e^{\eta}+\lambda\eta^{(1)}-\lambda\frac{G^{(1)}}{G},\eea \bea\label{e4.9}\Phi_2= \lambda\eta^{(1)}-e^{\eta}-\lambda\frac{H^{(1)}}{H}\left(\frac{a_1-a_2}{2}\right)^2(e^{2\eta}-1),\eea
\bea\label{e4.10} \Phi_3=\lambda(a_1+a_2).\eea
If $\Phi_1\not\equiv0$ or $\Phi_2\not\equiv0$, then by Lemma \ref{e3.2}, we see from $(\ref{e4.7})$ that $T(r,G)=S(r,f)$, and therefore from $(\ref{e4.4})$, we get $T(r,f)=S(r,f)$, which is a contradiction.
Therefore, $\Phi_1=\Phi_2=0$. Then from $(\ref{e4.7})$, we get $\Phi_3=0$. This implies that 
\bea\label{e4.11} e^{\eta}+\lambda\eta^{(1)}-\lambda\frac{G^{(1)}}{G}=0,\eea	
\bea\label{e4.12} \lambda\eta^{(1)}-e^{\eta}-\lambda\frac{H^{(1)}}{H}=0,\eea	
\bea\label{e4.13} a_1+a_2=0.\eea	
Adding $(\ref{e4.11})$ and $(\ref{e4.12})$, we get \beas \frac{G^{(1)}}{G}+\frac{H^{(1)}}{H}=2\eta^{(1)}, \eeas and so by integration , we have \bea\label{e4.14}GH=c_0e^{2\eta},\eea where $c_0$ is a non-zero constant.\vspace{1mm} 
\par Now from $(\ref{e4.2})$, $(\ref{e4.13})$ and $(\ref{e4.14})$, we get $e^{2\eta}=A$, where $A$ is a constant.\vspace{1mm}
\par From $(\ref{e4.4})$, $(\ref{e4.5})$ and $(\ref{e4.13})$, we get \bea\label{e4.15} \left(\sqrt{A}-\sum_{j=1}^{k}b_j\frac{G^{(j)}}{G}\right)G^{2}=\left(\sqrt{A}+\sum_{j=1}^{k}b_j\frac{H^{(j)}}{H}\right)B,\eea where $B=(a_1-a_2)^2/4(A-1)$, constant.\vspace{1mm}
\par If $\sqrt{A}-\sum_{j=1}^{k}b_jG^{(j)}/G\not\equiv0,$ then from $(\ref{e4.3})$ and $(\ref{e4.15})$, we get $T(r,G)=S(r,f)$ and so from $(\ref{e4.14})$, we get $T(r,F)=S(r,f)$. Therefore, from $(\ref{e4.4})$, we get $T(r,f)=S(r,f)$, which is a contradiction. hence we have $\sum_{j=1}^{k}b_jG^{(j)}-\sqrt{A}G=0$ and $\sum_{j=1}^{k}b_jH^{(j)}+\sqrt{A}H=0$. This implies by Lemma \ref{lem3.3} that $G$ and $H$ are of finite order. Also from $(\ref{e4.14})$, we see that $G$ and $H$ do not assume the value $0$.\vspace{1mm} \par Therefore, let us assume that $G=e^{P}$ and $H=e^{Q}$, where $P,\; Q$ are polynomials of degree $p$ and $q$, respectively. Differentiating $j$ times, we obtain $G^{(j)}=P_je^{P}$ and $H^{(j)}=Q_je^{Q}$, where $P_j$ and $Q_j$ are polynomials of degree $(p-1)j$ and $(q-1)j$, respectively. Since $\sum_{j=1}^{k}b_jG^{(j)}=\sqrt{A}G$ and $\sum_{j=1}^{k}b_jH^{(j)}=\sqrt{A}H$, we have $p=q=1$. Hence in view of $(\ref{e4.14})$, we may write $G=2d_1e^{cz}$ and $H=2d_2e^{-cz}$, where $c,\;d_1,\;d_2$ are non-zero constants.\vspace{1mm}
\par Now from $(\ref{e4.4})$ and  $(\ref{e4.13})$, we get \bea\label{e4.16} f=c_1e^{cz}+c_2e^{-cz},\eea where $c_1=d_1/\sqrt{A}$ and $c_2=d_2/\sqrt{A}$.\vspace{1mm}
\par Differentiating $(\ref{e4.16})$, we have \beas f^{(j)}=\frac{c_1c^{j}e^{2cz}+c_2(-c)^j}{e^{cz}},\eeas where $j=1,2,\ldots,k$. Therefore, \bea\label{e4.17} L(f)=\frac{\sum_{j=1}^{k}b_j(c_jc^je^{2cz}+c_2(-c)^j)}{e^{cz}}.\eea Again from $(\ref{e4.5})$ and $(\ref{e4.13})$, we get \bea\label{e4.18} L(f)=\frac{\sqrt{A}(c_1e^{2cz}-c_2)}{e^{cz}}.\eea Comparing $(\ref{e4.17})$ and $(\ref{e4.18})$, we obtain \bea\label{e4.19} b_1c+b_2c^2+\cdots+b_kc^{k}=\sqrt{A}\eea and
\bea\label{e4.20} -b_1c+b_2c^2-\cdots+(-1)^kb_kc^{k}=-\sqrt{A}.\eea
from $(\ref{e4.19})$ and $(\ref{e4.20})$, it is clear that $A=(b_1c+b_3c^3+\cdots+b_kc^{k})^2$, where $k$ is an odd positive integer.\vspace{1mm}
\par Now from $(\ref{e4.2})$ and $(\ref{e4.13})$, we see that $4d_1d_2=a_1^2(A-1)$ and so \beas 4c_1c_2A=a_1^2(A-1),\eeas where $A=(b_1c+b_3c^3+\cdots+b_kc^{k})^2$, where $k$ is an odd positive integer. This completes the proof of the Theorem \ref{t1}.\end{proof}
\vspace{1mm}
\begin{proof}[\bf Proof of Theorem \ref{t2}]
Let $2\xi$ be the principal branch of $\log h_1$, where $h_1$ is defined as in \ref{e3.23}. Then by Lemma \ref{lem3.8}, we obtain \beas T(r,e^{\xi})=\frac{1}{2}T(r,h_1)+S(r,f)=S(r,f).\eeas Also (\ref{e3.23}) can be written as \bea\label{e4.21} (M(f)-a_1)(M(f)-a_2)=e^{2\xi}(f-a_1)(f-a_2),\eea and so \bea\label{e4.22} G_1H_1=\left(\frac{a_1-a_2}{2}\right)^2(e^{2\xi}-1),\eea where \beas G_1=e^{\xi}f^{\lambda}-\frac{a_1+a_2}{2}e^{\xi}+M(f)-\frac{a_1+a_2}{2}\eeas and \beas H_1=e^{\xi}f-\frac{a_1+a_2}{2}e^{\xi}-M(f)+\frac{a_1+a_2}{2}.\eeas
If $e^{2\xi}\equiv1$, then from (\ref{e4.21}), we get \beas (f^{\lambda}-M(f))(f^{\lambda}+M(f)-a_1-a_2)=0,\eeas which implies that either $f^{\lambda}=M(f)$, or $f^{\lambda}+M(f)=a_1+a_2$.\vspace{1mm}
\par Now suppose that $e^{2\xi}\not\equiv1$. Since $f$ is entire we get $N(r,G_1)+N(r,H_1)=S(r, f)$, and so, from (\ref{e4.22}), we get $N(r,1/H_1)+N(r,1/G_1)=S(r,f)$.
Therefore, \bea\label{e4.23} T\left(r,\frac{G_1^{(j)}}{G_1}\right)+T\left(r,\frac{H_1^{(j)}}{H_1}\right)=S(r,f),\eea where $j=1,2,\ldots,k$.\vspace{1mm}
\par Suppose $(f^{\lambda})^{(1)}=b_1 M(f)$. Then using the condition (\ref{e2.1a}), the Lemma of logarithmic derivative, and the first fundamental theorem of Nevalinna, it is easily seen that $T(r,b_1)=S(r,f)$.
\par From the definition of $G_1$ and $H_1$ it follows that \bea\label{e4.24} G_1+H_1=e^{\xi}(2f^{\lambda}-a_1-a_2)\eea and \bea\label{e4.25} G_1-H_1=2M(f)-a_1-a_2=2\mu (f^{\lambda})^{(1)}-a_1-a_2,\eea where $b\mu=1$ and so $T(r,\mu)=S(r,f)$ as $T(r,b)=S(r,f).$\vspace{1mm}
\par Eliminating $f^{\lambda}$ and $(f^{\lambda})^{(1)}$ from (\ref{e4.24}) and (\ref{e4.25}), we obtain  \bea\label{e4.26} \left(e^{\xi}+\mu\xi^{(1)}-\mu\frac{G_1^{(1)}}{G_1}\right)G_1+\left(\mu\xi^{(1)}-e^{\xi}-\mu\frac{H_1^{(1)}}{H_1}\right)H_1+b_1(a_1+a_2)=0.\eea
Now eliminating $H_1$ from $(\ref{e4.22})$ and $(\ref{e4.26})$, we obtain
\bea\label{e4.27} \chi_{_1} G^{2}+\chi_{_2} G+\chi_{_3}=0,\eea where \bea\label{e4.28} \chi_{_1}=e^{\xi}+\mu\xi^{(1)}-\mu\frac{G_1^{(1)}}{G_1},\eea \bea\label{e4.29}\chi_{_2}= \mu\xi^{(1)}-e^{\xi}-\mu\frac{H_1^{(1)}}{H_1}\left(\frac{a_1-a_2}{2}\right)^2(e^{2\xi}-1),\eea
\bea\label{e4.30} \chi_{_3}=\mu(a_1+a_2).\eea
If $\chi_{_1}\not\equiv0$ or $\chi_{_2}\not\equiv0$, then by Lemma \ref{e3.2}, we get from $(\ref{e4.27})$ that $T(r,G_1)=S(r,f)$, and so from (\ref{e4.22}), we get $T(r,H_1)=S(r,f)$. So, from $(\ref{e4.24})$, we get $T(r,f)=S(r,f)$, which is a contradiction.
Therefore, $\chi_{_1}=\chi_{_2}=0$. Then from $(\ref{e4.27})$, we get $\chi_{_3}=0$. This implies that 
\bea\label{e4.31} e^{\xi}+\mu\xi^{(1)}-\mu\frac{G_1^{(1)}}{G_1}=0,\eea	
\bea\label{e4.32} \mu\xi^{(1)}-e^{\xi}-\mu\frac{H_1^{(1)}}{H_1}=0,\eea	
\bea\label{e4.33} a_1+a_2=0.\eea	
Adding $(\ref{e4.31})$ and $(\ref{e4.32})$, we get \beas \frac{G_1^{(1)}}{G_1}+\frac{H_1^{(1)}}{H_1}=2\xi^{(1)}, \eeas and so by integration , we have \bea\label{e4.34}G_1H_1=c_0^{*}e^{2\xi},\eea where $c_0^{*}$ is a non-zero constant.\vspace{1mm} 
\par Now from $(\ref{e4.22})$, $(\ref{e4.33})$ and $(\ref{e4.34})$, we get $e^{2\xi}=A$, where $A$ is a constant.\vspace{1mm}
\par From $(\ref{e4.24})$, $(\ref{e4.25})$ and $(\ref{e4.33})$, we get \bea\label{e4.35} \left(\frac{\sqrt{A}}{\mu}-\frac{G_1^{(1)}}{G_1}\right)G_1^{2}=-\left(\frac{\sqrt{A}}{\mu}-\frac{H_1^{(1)}}{H_1}\right)B,\eea where $B=(a_1-a_2)^2/4(A-1)$, constant.\vspace{1mm}
\par If $\sqrt{A}/\mu-G_1^{(1)}/G_1\not\equiv0,$ then from $(\ref{e4.23})$ and $(\ref{e4.35})$, we get $T(r,G_1)=S(r,f)$ and so from $(\ref{e4.34})$, we get $T(r,H_1)=S(r,f)$. Therefore, from $(\ref{e4.24})$, we get $T(r,f)=S(r,f)$, which is a contradiction. Hence we must have $\mu G_1^{(1)}-\sqrt{A}G_1=0$ and $\mu H_1^{(1)}-\sqrt{A}H_1=0$. This implies by Lemma \ref{lem3.3} that $G_1$ and $H_1$ are of finite order. Also from $(\ref{e4.34})$, we see that $G_1$ and $H_1$ do not assume the value $0$.\vspace{1mm}
\par Therefore, we may assume that $G_1=e^{P}$ and $H_1=e^{Q}$, where $P,\; Q$ are polynomials of degree $p$ and $q$, respectively.\vspace{1mm}
\par Differentiating once, we get $G_1^{(1)}=P^{(1)}e^{P}$ and $H_1^{(1)}=Q^{(1)}e^{Q}$. Therefore, $P^{(1)}$ and $Q^{(1)}$ are polynomials of degree $(p-1)$ and $(q-1)$, respectively. Since $\mu G_1^{(1)}=\sqrt{A}G_1$ and $\mu H_1^{(1)}=\sqrt{A}H_1$, we have $p=q=1$. Hence in view of $(\ref{e4.34})$, we may write $G_1=2d_1^{*}e^{cz}$ and $H=2d_2^{*}e^{-cz}$, where $c,\;d_1^{*},\;d_2^{*}$ are non-zero constants.\vspace{1mm}
\par Now from $(\ref{e4.24})$, $(\ref{e4.25})$ and  $(\ref{e4.33})$, we get \beas f^{\lambda}=c_1e^{cz}+c_2e^{-cz},\; M(f)=\frac{\sqrt{A}(c_1e^{2cz}-c_2)}{e^{cz}},\eeas where $c_1=d_1^{*}/\sqrt{A}$ and $c_2=d_2^{*}/\sqrt{A}$. This completes the proof of the theorem.\vspace{1mm}

\end{proof}





\end{document}